\definecolor{uuuuuu}{rgb}{0.26666666666666666,0.26666666666666666,0.26666666666666666}
\definecolor{xdxdff}{rgb}{0.49019607843137253,0.49019607843137253,1.}
\definecolor{ffqqqq}{rgb}{1.,0.,0.}
\definecolor{uuuuuu}{rgb}{0.26666666666666666,0.26666666666666666,0.26666666666666666}
\definecolor{qqwuqq}{rgb}{0.,0.39215686274509803,0.}
\definecolor{zzttqq}{rgb}{0.6,0.2,0.}
\definecolor{xdxdff}{rgb}{0.49019607843137253,0.49019607843137253,1.}
\definecolor{qqqqff}{rgb}{0.,0.,1.}
\definecolor{cqcqcq}{rgb}{0.7529411764705882,0.7529411764705882,0.7529411764705882}
\theoremstyle{plain}
\theoremstyle{definition}
\newtheorem{prop}[subsection]{Proposition}
\newtheorem{example}[subsection]{Example}
\newtheorem{remark}[subsection]{Remark}
\newcommand{\uu}{\cup}
\newcommand{\sci}{\subset}
\newcommand{\set}[1]{\{#1\}}
\newcommand{\ga}{\alpha}
\newcommand{\gb}{\beta}
\newcommand{\tit}{\textit}
\newcommand{\D}[1]{\mathbb{#1}}
\newcommand{\F}[1]{\mathfrak{#1}}
\newcommand{\te}{\text}
\newcommand{\ol}{\overline}
\newcommand{\tri}{\triangle}
\begin{document}

To appear, Journal of Interdisciplinary Mathematics
\title{Center of mass and the optimal quantizers for some continuous and discrete uniform distributions}

\author{ Mrinal Kanti Roychowdhury}
\address{School of Mathematical and Statistical Sciences\\
University of Texas Rio Grande Valley\\
1201 West University Drive\\
Edinburg, TX 78539-2999, USA.}
\email{mrinal.roychowdhury@utrgv.edu}

\subjclass[2010]{60Exx, 62Exx, 94A34.}
\keywords{Center of mass, uniform distribution, optimal sets.}

\date{}
\maketitle

\pagestyle{myheadings}\markboth{Mrinal Kanti Roychowdhury}{Center of mass and the optimal quantizers for some continuous and discrete uniform distributions}

\begin{abstract}
In this paper, we first consider a flat plate (called a lamina) with uniform density $\rho$ that occupies a region $\mathfrak R$ of the plane. We show that the location of the center of mass, also known as the centroid, of the region equals the expected vector of a bivariate continuous random variable with a uniform probability distribution taking values on the region $\mathfrak R$. Using this property, we prove that the Voronoi regions of an optimal set of two-means with respect to the uniform distribution defined on a disc partition the disc into two regions bounded by the semicircles. Besides, we show that if an isosceles triangle is partitioned into an isosceles triangle and an isosceles trapezoid in the Golden ratio, then their centers of mass form a centroidal Voronoi tessellation of the triangle. In addition, using the properties of center of mass we determine the optimal sets of two-means and the corresponding quantization error for a uniform distribution defined on a region with uniform density bounded by a rhombus. Further, we determine the optimal sets of $n$-means, and the $n$th quantization errors for two different discrete uniform distributions for some positive integers $n\leq \text{card(supp}(P))$.
\end{abstract}

\section{Introduction}

Let us consider a flat plate, called a lamina, with uniform density $\rho$ that occupies a region $\F R$ of the plane. By the density $\rho$, it is meant that the mass per unit area of the region $\F R$ is $\rho$. The \tit{center of mass} or the \tit{centroid} of the region is the point in which the region will be perfectly balanced horizontally if suspended from the point. Let the region $\F R$ lies between the two curves $x_2=f(x_1)$ and $x_2=g(x_1)$ bounded by the lines $x_1=a$ and $x_1=b$, where $f(x_1)\geq g(x_1)$ for all $(x_1, x_2) \in \F R$. Let $A$ be the total area of the region $\F R$. Then, $A=\int_a^b (f(x_1)-g(x_1)) dx_1$. It is known that if $(\ol x_1, \ol x_2)$ is the centroid of the region $\F R$, then
 \begin{equation} \label{eq0} \ol x_1 =\frac 1 A \int_a^b x_1(f(x_1)-g(x_1)) dx_1 \te{ and } \ol x_2 =\frac 1 A \int_a^b \frac 12  ([f(x_1)]^2-[g(x_1)]^2) dx_1.
 \end{equation}
Given a finite subset $\ga$ of $\D R^2$, the Voronoi region generated by $a\in \ga$ is the set of all elements in $\D R^2$ which are nearest $a$, and is denoted by $M(a|\ga)$, i.e.,
\[M(a|\ga)=\set{x \in \D R^2 : \|x-a\|=\min_{b \in \ga}\|x-b\|},\]
where  $\|\cdot\|$ denotes the Euclidean norm on $\D R^2$. The set $\set{M(a|\ga) : a \in \ga}$ is called the \tit{Voronoi diagram} or \tit{Voronoi tessellation} of $\D R^2$ with respect to the set $\ga$. A Voronoi tessellation is called a \tit{centroidal Voronoi tessellation} (CVT) if each of the generators of the tessellation is also the centroid of its own Voronoi region. Centroidal Voronoi tessellations (CVTs) have become a useful tool in many applications ranging from geometric modeling, image and data analysis, and numerical partial differential equations, to problems in physics, astrophysics, chemistry, and biology (see \cite{DFG} for some more details).

Let $P$ denote a Borel probability measure on $\D R^2$.
For a finite set $\ga\sci\D R^2$, the error $\int \min_{a \in \ga} \|x-a\|^2 dP(x)$ is often referred to as the \tit{cost} or \tit{distortion error} for $\ga$, and is denoted by $V(P; \ga)$. For any positive integer $n$, write $V_n:=V_n(P)=\inf\set{V(P; \ga) :\alpha \subset \mathbb R^2, \text{ card}(\alpha) \leq n}$. Then, $V_n$ is called the $n$th quantization error for $P$. If $\int \| x\|^2 dP(x)<\infty$, then there is some set $\ga$ for
which the infimum is achieved (see \cite{ GKL, GL, GL1}). Such a set $\ga$ for which the infimum occurs and contains no more than $n$ points is called an \tit{optimal set of $n$-means}. The elements of an optimal set are called \tit{optimal centers}, or \te{optimal quantizers}. In some literature it is also referred to as \tit{principal points} (see \cite{MKT}, and the references therein). Let $\ga$ be an optimal set of $n$-means for a Borel probability measure $P$ on $\D R^2$. Let $a \in \ga$, and $M (a|\ga)$ be the Voronoi region generated by $a\in \ga$. Then, for every $a \in \ga $ it is well-known that $a=E(X : X \in M(a|\ga))$ (see \cite[Section~4.1]{GL1} and \cite[Chapter~6 and Chapter~11]{GG}). It has broad applications in signal processing and data compression. For some details and comprehensive lists of references one can see \cite{GG, GKL, GN, Z}. Rigorous mathematical treatment of the quantization theory is given in Graf-Luschgy's book (see \cite{GL1}). For some recent work in this direction one can see \cite{DR, R, RR}.

In this paper, we consider a flat plate (called a lamina) with uniform density $\rho$ that occupies a region $\F R$ of the plane. In Proposition~\ref{prop1}, we show that the location of the center of mass of the region equals the expected vector of a bivariate continuous random variable with a uniform probability distribution taking values on the region $\F R$. In other words, we show that with respect to the uniform distribution, the point in an optimal set of one-mean coincides with the center of mass of the lamina. If the probability distribution is not uniform, then Proposition~\ref{prop1} is not true. In this regard we give  a counter example Example~\ref{ex2}. In \cite{R}, Roychowdhury gave a conjecture that with respect to the uniform distribution defined on a disc, the Voronoi regions of the points in an optimal set of two-means partition the disc into two semicircles. Here by the semicircle it is meant one half of the disc bounded by the semicircle. Using Proposition~\ref{prop1}, in Proposition~\ref{prop2}, we prove that the conjecture is true. Besides, in Proposition~\ref{prop3}, we show that if an isosceles triangle is partitioned into an isosceles triangle and an isosceles trapezoid in the Golden ratio, then their centers of mass form a centroidal Voronoi tessellation of the triangle. In addition, in Proposition~\ref{prop4}, using the properties of center of mass, we determine the optimal set of two-means and the corresponding quantization error for a uniform distribution defined on a region with uniform density bounded by a rhombus. The proof of this proposition shows that the optimal set of two-means forms a centroidal Voronoi tessellation, but the converse is not true (see Remark~\ref{rem1}). Finally, in the last section, for two different discrete distributions $P$, we determine the optimal sets of $n$-means and the $n$th quantization errors for some positive integers $n\leq \te{card(supp}(P))$.

\section{Main Result}

For a bivariate continuous random variable $X:=(X_1, X_2)$ taking values on a region with some probability distribution, let $E(X)$ represent the expected vector of $X$. On the other hand, by $E(X_1)$ and $E(X_2)$, we denote the expectations of the random variables $X_1$ and $X_2$ with respect to their marginal distributions. By the position vector $\tilde a$ of a point $A$, it is meant that $\overrightarrow{OA}=\tilde a$. In the sequel, we will identify the position vector of a point $(a_1, a_2)$ by $(a_1, a_2):=a_1 i +a_2 j$, and apologize for any abuse in notation. Here $i$ and $j$ are the two unit vectors in the positive directions of $x_1$- and $x_2$-axes, respectively. For any two vectors $\vec u$ and $\vec v$, let $\vec u \cdot \vec v$ denote the dot product between the two vectors $\vec u$ and $\vec v$. Then, for any vector $\vec v$, by $(\vec v)^2$, we mean $(\vec v)^2:= \vec v\cdot \vec v$. Thus, $|\vec v|:=\sqrt{\vec v\cdot \vec v}$, which is called the length of the vector $\vec v$. For any two position vectors $\tilde a:=( a_1, a_2)$ and $\tilde b:=( b_1, b_2)$, we write $\rho(\tilde a, \tilde b):=(( a_1-b_1, a_2-b_2))^2=(a_1-b_1)^2 +(a_2-b_2)^2$ to represent the squared Euclidean distance between the two points $(a_1, a_2)$ and $(b_1, b_2)$.

 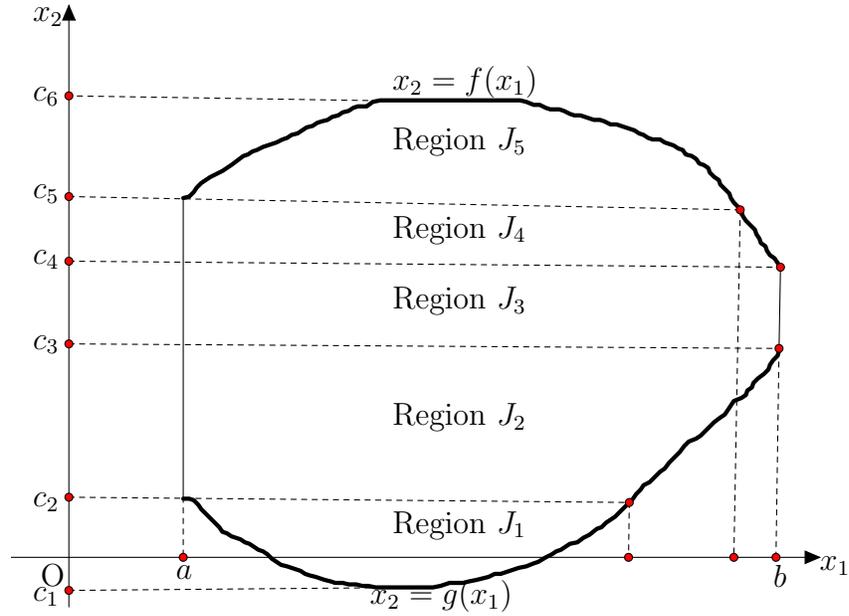
\begin{figure}
 \begin{tikzpicture}[line cap=round,line join=round,>=triangle 45,x=1.0cm,y=1.0cm]
\clip(-0.76,-0.8) rectangle (10.37,7.5);
\draw [line width=1.6pt,] (1.52,0.78)-- (1.6,0.78)-- (1.66,0.76)-- (1.72,0.7)-- (1.76,0.64)-- (1.82,0.58)-- (1.88,0.52)-- (1.94,0.46)-- (2.,0.4)-- (2.04,0.34)-- (2.1,0.32)-- (2.16,0.28)-- (2.22,0.26)-- (2.28,0.24)-- (2.34,0.2)-- (2.4,0.18)-- (2.46,0.14)-- (2.52,0.1)-- (2.58,0.08)-- (2.64,0.04)-- (2.7,-0.02)-- (2.76,-0.06)-- (2.82,-0.1)-- (2.9,-0.12)-- (2.96,-0.14)-- (3.02,-0.16)-- (3.08,-0.18)-- (3.14,-0.22)-- (3.2,-0.24)-- (3.28,-0.26)-- (3.34,-0.28)-- (3.4,-0.3)-- (3.48,-0.32)-- (3.54,-0.34)-- (3.64,-0.34)-- (3.72,-0.36)-- (3.82,-0.36)-- (3.9,-0.38)-- (3.98,-0.38)-- (4.08,-0.4)-- (4.18,-0.4)-- (4.26,-0.4)-- (4.36,-0.4)-- (4.46,-0.4)-- (4.56,-0.4)-- (4.64,-0.4)-- (4.72,-0.4)-- (4.84,-0.4)-- (4.94,-0.38)-- (5.,-0.36)-- (5.06,-0.34)-- (5.14,-0.34)-- (5.24,-0.34)-- (5.32,-0.32)-- (5.42,-0.3)-- (5.5,-0.28)-- (5.58,-0.26)-- (5.66,-0.22)-- (5.74,-0.22)-- (5.8,-0.2)-- (5.86,-0.18)-- (5.92,-0.16)-- (5.98,-0.14)-- (6.08,-0.12)-- (6.18,-0.08)-- (6.24,-0.06)-- (6.34,0.)-- (6.42,0.04)-- (6.48,0.08)-- (6.54,0.1)-- (6.6,0.14)-- (6.66,0.18)-- (6.76,0.2)-- (6.84,0.26)-- (6.92,0.3)-- (7.02,0.38)-- (7.08,0.4)-- (7.14,0.46)-- (7.2,0.5)-- (7.26,0.54)-- (7.32,0.6)-- (7.38,0.66)-- (7.44,0.72)-- (7.5,0.78)-- (7.54,0.84)-- (7.6,0.9)-- (7.64,0.96)-- (7.72,1.)-- (7.8,1.08)-- (7.86,1.14)-- (7.92,1.18)-- (7.98,1.26)-- (8.06,1.34)-- (8.12,1.4)-- (8.18,1.48)-- (8.24,1.54)-- (8.34,1.58)-- (8.4,1.62)-- (8.46,1.68)-- (8.52,1.74)-- (8.58,1.8)-- (8.66,1.86)-- (8.72,1.9)-- (8.76,1.96)-- (8.8,2.02)-- (8.84,2.08)-- (8.94,2.12)-- (9.,2.16)-- (9.02,2.22)-- (9.08,2.26)-- (9.1,2.32)-- (9.16,2.38)-- (9.24,2.44)-- (9.3,2.5)-- (9.32,2.56)-- (9.36,2.62)-- (9.42,2.68)-- (9.44,2.74);
\draw [line width=1.6pt,] (1.52,4.78)-- (1.6,4.8)-- (1.66,4.86)-- (1.7,4.92)-- (1.76,4.98)-- (1.88,5.06)-- (2.04,5.14)-- (2.1,5.18)-- (2.16,5.22)-- (2.24,5.28)-- (2.32,5.34)-- (2.38,5.36)-- (2.44,5.4)-- (2.5,5.42)-- (2.58,5.46)-- (2.64,5.48)-- (2.7,5.5)-- (2.76,5.52)-- (2.84,5.56)-- (2.9,5.58)-- (2.96,5.62)-- (3.04,5.64)-- (3.1,5.68)-- (3.16,5.72)-- (3.24,5.74)-- (3.3,5.78)-- (3.36,5.8)-- (3.44,5.82)-- (3.5,5.86)-- (3.58,5.88)-- (3.64,5.92)-- (3.72,5.94)-- (3.78,5.98)-- (3.84,6.)-- (3.96,6.)-- (4.04,6.06)-- (4.14,6.08)-- (4.24,6.08)-- (4.32,6.08)-- (4.42,6.08)-- (4.52,6.08)-- (4.6,6.08)-- (4.68,6.08)-- (4.78,6.08)-- (4.92,6.08)-- (5.02,6.08)-- (5.14,6.08)-- (5.22,6.08)-- (5.3,6.08)-- (5.4,6.08)-- (5.48,6.08)-- (5.58,6.08)-- (5.66,6.08)-- (5.74,6.08)-- (5.82,6.08)-- (5.9,6.08)-- (6.,6.08)-- (6.08,6.06)-- (6.18,6.02)-- (6.26,6.02)-- (6.34,5.98)-- (6.4,5.96)-- (6.52,5.96)-- (6.58,5.94)-- (6.66,5.9)-- (6.72,5.88)-- (6.8,5.88)-- (6.86,5.86)-- (6.92,5.84)-- (6.98,5.82)-- (7.06,5.82)-- (7.12,5.8)-- (7.2,5.76)-- (7.28,5.76)-- (7.34,5.74)-- (7.4,5.72)-- (7.46,5.7)-- (7.52,5.68)-- (7.58,5.64)-- (7.64,5.62)-- (7.7,5.58)-- (7.76,5.56)-- (7.86,5.56)-- (7.92,5.52)-- (7.98,5.5)-- (8.04,5.48)-- (8.1,5.44)-- (8.16,5.42)-- (8.2,5.36)-- (8.28,5.34)-- (8.34,5.3)-- (8.38,5.24)-- (8.44,5.22)-- (8.5,5.18)-- (8.52,5.12)-- (8.58,5.06)-- (8.64,5.02)-- (8.66,4.96)-- (8.72,4.9)-- (8.78,4.84)-- (8.8,4.78)-- (8.84,4.72)-- (8.9,4.66)-- (8.94,4.6)-- (8.96,4.54)-- (9.,4.48)-- (9.06,4.42)-- (9.12,4.36)-- (9.14,4.3)-- (9.18,4.24)-- (9.2,4.18)-- (9.26,4.14)-- (9.28,4.08)-- (9.32,4.02)-- (9.38,3.98)-- (9.42,3.92)-- (9.46,3.86);
\draw (9.46,3.86)-- (9.44,2.74);
\draw (1.52,4.78)-- (1.52,0.78);
\draw [->] (0.,-0.66) -- (0.,7.36);
\draw [->] (-1.,0.) -- (10.,0.);
\draw (-0.52,0.06) node[anchor=north west] {O};
\draw (-0.62,7.46) node[anchor=north west] {$x_2$};
\draw (9.84,0.16) node[anchor=north west] {$x_1$};
\draw (-0.62,1.0) node[anchor=north west] {$c_2$};
\draw (-0.62,3.1) node[anchor=north west] {$c_3$};
\draw [dash pattern=on 2pt off 2pt] (0.,-0.44)-- (4.68,-0.4);
\draw (-0.62,4.23) node[anchor=north west] {$c_4$};
\draw (-0.62,5.1) node[anchor=north west] {$c_5$};
\draw [dash pattern=on 2pt off 2pt] (4.116923076923074,6.075384615384605)-- (0.,6.14);
\draw (-0.62,6.40) node[anchor=north west] {$c_6$};
\draw (3.86,-0.16) node[anchor=north west] {$x_2=g(x_1)$};
\draw (4.16, 6.68) node[anchor=north west] {$x_2=f(x_1)$};
\draw (4.16,0.76) node[anchor=north west] {$\text{Region } J_1$};
\draw (4.16,2.20) node[anchor=north west] {$\text{Region } J_2$};
\draw (4.16,3.74) node[anchor=north west] {$\text{Region } J_3$};
\draw (4.16,4.69) node[anchor=north west] {$\text{Region } J_4$};
\draw (4.16,5.86) node[anchor=north west] {$\text{Region } J_5$};
\draw [dash pattern=on 2pt off 2pt] (0.,0.8)-- (7.45,0.73);
\draw [dash pattern=on 2pt off 2pt] (0.,2.84)-- (9.440701306981175,2.7792731909467645);
\draw [dash pattern=on 2pt off 2pt] (0.,3.94)-- (9.46,3.86);
\draw [dash pattern=on 2pt off 2pt] (0.,4.8)-- (8.92461538461536,4.623076923076903);
\draw (-0.62,-0.25) node[anchor=north west] {$c_1$};
\draw [dash pattern=on 2pt off 2pt] (1.52,0.8)-- (1.52,0.);
\draw (1.28,0.04) node[anchor=north west] {$a$};
\draw [dash pattern=on 2pt off 2pt] (9.4,0.)-- (9.440701306981175,2.7792731909467645);
\draw [dash pattern=on 2pt off 2pt] (8.92461538461536,4.623076923076903)-- (8.84,0.);
\draw [dash pattern=on 2pt off 2pt] (7.45,0.73)-- (7.44,0.);
\draw (9.22,0.04) node[anchor=north west] {$b$};
\begin{scriptsize}
\draw [fill=xdxdff] (0.,15.38) circle (2.5pt);
\draw [fill=qqqqff] (21.,0.) circle (2.5pt);
\draw [fill=ffqqqq] (0.,-0.44) circle (1.5pt);
\draw [fill=ffqqqq] (0.,6.14) circle (1.5pt);
\draw [fill=ffqqqq] (0.,4.8) circle (1.5pt);
\draw [fill=ffqqqq] (0.,3.94) circle (1.5pt);
\draw [fill=ffqqqq] (0.,2.84) circle (1.5pt);
\draw [fill=ffqqqq] (0.,0.8) circle (1.5pt);
\draw [fill=ffqqqq] (7.45,0.73) circle (1.5pt);
\draw [fill=ffqqqq] (9.440701306981175,2.7792731909467645) circle (1.5pt);
\draw [fill=ffqqqq] (9.46,3.86) circle (1.5pt);
\draw [fill=ffqqqq] (8.92461538461536,4.623076923076903) circle (1.5pt);
\draw [fill=ffqqqq] (1.52,0.) circle (1.5pt);
\draw [fill=ffqqqq] (9.4,0.) circle (1.5pt);
\draw [fill=ffqqqq] (8.84,0.) circle (1.5pt);
\draw [fill=ffqqqq] (7.44,0.) circle (1.5pt);
\end{scriptsize}
\end{tikzpicture}
\caption{Partition of the region $\F R$.} \label{Fig1}
\end{figure}

Let us now prove the following proposition.

\begin{prop} \label{prop1} Let $(\ol x_1, \ol x_2)$ be the center of mass of a lamina with uniform density $\rho$. Let $X:=(X_1, X_2)$ be a bivariate continuous random variable with uniform distribution taking values on the region $\F R$ occupied by the lamina. Then,
\[E(X)=(E(X_1), E(X_2))=(\ol x_1, \ol x_2).\]
\end{prop}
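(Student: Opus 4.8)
The plan is to compute the two coordinate expectations $E(X_1)$ and $E(X_2)$ directly and match them against the centroid formulas already recorded in \eqref{eq0}. The single essential observation is that ``uniform distribution on $\F R$'' forces the joint probability density of $X=(X_1,X_2)$ to be the constant $\frac 1A$ on $\F R$ and $0$ outside, where $A$ is the area of $\F R$; this is the unique constant making the density integrate to $1$. It is worth noting that the mass density $\rho$ never appears in \eqref{eq0}: in the physical centroid $\frac 1M\iint_{\F R} x_i\,\rho\, dA$ the factor $\rho$ cancels against the total mass $M=\rho A$, leaving the normalized area integral $\frac 1A\iint_{\F R} x_i\, dA$. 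Thus the centroid and the uniform expectation reduce to the \emph{same} integral, and the proof amounts to verifying that its iterated form coincides with \eqref{eq0}.

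First I would write $E(X_1)=\frac 1A\int_a^b\Big(\int_{g(x_1)}^{f(x_1)} x_1\, dx_2\Big) dx_1$, which is legitimate because $\F R$ is exactly the region between $x_2=g(x_1)$ and $x_2=f(x_1)$ over $a\le x_1\le b$ (see Figure~\ref{Fig1}). Since $x_1$ is constant in the inner integral, the inner integral equals $x_1\,(f(x_1)-g(x_1))$, reproducing the expression for $\ol x_1$ in \eqref{eq0}. Next I would treat $E(X_2)=\frac 1A\int_a^b\Big(\int_{g(x_1)}^{f(x_1)} x_2\, dx_2\Big) dx_1$; here the inner integral is $\frac 12\big([f(x_1)]^2-[g(x_1)]^2\big)$, reproducing the expression for $\ol x_2$ in \eqref{eq0}. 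Assembling the two coordinates gives $E(X)=(E(X_1),E(X_2))=(\ol x_1,\ol x_2)$, which is the claim.

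There is no serious obstacle here: the statement is a direct unwinding of definitions once the density is pinned down as $\frac 1A$. The only point deserving a word of care is the passage from the planar expectation to the iterated integral, i.e.\ the appeal to Fubini's theorem; this is justified because $x_1$ and $x_2$ are continuous and $\F R$ is bounded of finite area, so the integrals converge absolutely. If one wishes to avoid even this mild step, one can instead decompose $\F R$ into the thin horizontal strips $J_1,\dots,J_5$ shown in Figure~\ref{Fig1}, compute the contribution of each strip, and recover the same two formulas as the limit of the corresponding Riemann sums.
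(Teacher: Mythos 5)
Your proof is correct, but it takes a noticeably more direct route than the paper's. You define the expectation of each coordinate as the double integral $\frac 1A\iint_{\F R} x_i\,dA$ and immediately evaluate it as an iterated integral with $x_2$ innermost, which reduces both coordinates to the formulas in \eqref{eq0} in one line each; the only thing you need is Fubini, which you correctly justify by boundedness of $\F R$. The paper instead insists on computing $E(X_1)$ and $E(X_2)$ through the one--dimensional marginal densities $f_1(x_1)$ and $f_2(x_2)$. For $E(X_1)$ this costs nothing, since $\F R$ is simple in the $x_1$ direction; but to get $f_2(x_2)$ the paper must decompose $\F R$ into the five horizontal bands $J_1,\dots,J_5$ of Figure~\ref{Fig1} (because the horizontal slices of $\F R$ change type as $x_2$ varies), write $f_2$ piecewise on each band, and then undo all of that by changing the order of integration to land back on exactly the iterated integral you started from. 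So the two arguments meet at the same final computation; yours simply skips the detour through the marginals, while the paper's version has the side benefit of exhibiting the marginal densities explicitly (which is the template it reuses in Example~\ref{example1}). Your remark that $\rho$ cancels against the total mass $M=\rho A$, so that the physical centroid and the probabilistic mean are literally the same normalized area integral, is a clean way of stating the real content of the proposition and is consistent with the paper. One small caveat: your closing sentence describes $J_1,\dots,J_5$ as ``thin horizontal strips'' arising in a Riemann--sum limit; in the paper they are finitely many bands determined by where the boundary curves change monotonicity, not infinitesimal strips, so that alternative route is really the paper's own argument rather than a limiting one.
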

\begin{proof} Let $f(x_1, x_2)$ be the probability density function (pdf) of the bivariate continuous random variable $X:=(X_1, X_2)$ taking values on the region $\F R$ with respect to the uniform distribution. Let $A$ represent the area of the region. Then,
\[f(x_1, x_2)=\left\{\begin{array}{ccc}
\frac 1 A & \te{ for } (x_1, x_2) \in \F R, \\
\ 0  & \te{ otherwise}.
\end{array}\right.
\]
Let $f_1(x_1)$ and $f_2(x_2)$ represent the marginal pdfs of the random variables $X_1$ and $X_2$, respectively. Then, following the definitions in Probability Theory, we have
\[f_1(x_1) =\int_{g(x_1)}^{f(x_1)}f(x_1, x_2) dx_2  \te{ for } a\leq x_1\leq b.\]
To find $f_2(x_2)$ we have to proceed as follows: Split the region $\F R$ into five regions such as $J_1, J_2, \cdots,  J_5$ (see Figure~\ref{Fig1}). The regions $J_1, J_2, \cdots, J_5$ heavily depend on the two functions $f(x_1)$ and $g(x_1)$. We might have even more than five regions, or less in some cases. Thus, $J_1, J_2, \cdots, J_5$ are bounded by the lines $x_1=a$, $x_1=b$, and $x_2=c_i$ for $1\leq i \leq 6$, and the curves $x_2=g(x_1)$ and $x_2=f(x_1)$. Hence, as shown in Figure~\ref{Fig1}, we have
\begin{align*}
J_1&=\set{(x_1, x_2) : \min (g^{-1}(x_2)) \leq x_1\leq \max (g^{-1}(x_2)) \te{ and } c_1\leq x_2\leq c_2},\\
J_2&=\set{(x_1, x_2) : a\leq x_1\leq g^{-1}(x_2) \te{ and } c_2\leq x_2\leq c_3},\\
J_3&=\set{(x_1, x_2) : a\leq x_1\leq b \te{ and } c_3\leq x_2\leq c_4},\\
J_4&=\set{(x_1, x_2) : a\leq x_1\leq f^{-1}(x_2) \te{ and } c_4\leq x_2\leq c_5},\\
J_5&=\set{(x_1, x_2) : \min (f^{-1}(x_2)) \leq x_1\leq \max (f^{-1}(x_2))\te{ and } c_5\leq x_2\leq c_6},
\end{align*}
yielding
\begin{align*}
f_2(x_2)&= \left\{\begin{array} {ll }
\int_{ \min (g^{-1}(x_2))}^{\max (g^{-1}(x_2))}f(x_1, x_2) dx_1 & \te{ for }  c_1\leq x_2\leq c_2,\\
\int_a^{g^{-1}(x_2)} f(x_1, x_2) dx_1 & \te{ for } c_2\leq x_2\leq c_3, \\
\int_a^{b} f(x_1, x_2) dx_1 & \te{ for } c_3\leq x_2\leq c_4, \\
\int_a^{f^{-1}(x_2)} f(x_1, x_2) dx_1& \te{ for } c_4\leq x_2\leq c_5, \\
\int_{\min (f^{-1}(x_2))}^{\max (f^{-1}(x_2))} f(x_1, x_2) dx_1 & \te{ for } c_5\leq x_2\leq c_6.
\end{array} \right.
\end{align*}
Recall that for any $(x_1, x_2) \in \F R$, $f(x_1, x_2)=\frac 1 A$, and zero, otherwise. Thus, we have
\[E(X_1) =\int_a^b x_1 f_1(x_1)\, dx_1=\frac 1 A \int_a^b \int_{g(x_1)}^{f(x_1)}x_1 dx_2  dx_1=\frac 1 A \int_a^b x_1(f(x_1)-g(x_1))dx_1, \]
which by \eqref{eq0} implies that $E(X_1)=\ol x_1$. To show $E(X_2)=\ol x_2$, we will mainly use the changing in the order of integration in the regions of double integrals. We have
\begin{align*}
&E(X_2)=\int_{c_1}^{c_6} x_2f_2(x_2)dx_2\\
&=\frac 1 A\Big[\Big(\int_{c_1}^{c_2} \int_{ \min (g^{-1}(x_2))}^{\max (g^{-1}(x_2))}x_2dx_1dx_2+\int_{c_2}^{c_3} \int_a^{g^{-1}(x_2)}x_2 dx_1dx_2\Big) +\int_{c_3}^{c_4} \int_a^{b} x_2 dx_1dx_2\\
&\qquad +\Big(\int_{c_4}^{c_5}\int_a^{f^{-1}(x_2)} x_2 dx_1dx_2+\int_{c_5}^{c_6} \int_{\min (f^{-1}(x_2))}^{\max (f^{-1}(x_2))} x_2 dx_1dx_2\Big)\Big]\\
&=\frac 1 A\Big[\int_{a}^{b} \int_{g(x_1)}^{c_3} x_2dx_2dx_1+\int_{a}^{b} \int_{c_3}^{c_4}x_2 dx_2dx_1 +\int_{a}^{b} \int_{c_4}^{f(x_1)} x_2 dx_2dx_1\Big]\\
&=\frac 1 A \int_{a}^{b} \int_{g(x_1)}^{f(x_1)} x_2dx_2dx_1=\frac 1 A\int_a^b \frac 12  ([f(x_1)]^2-[g(x_1)]^2) dx_1,
\end{align*}
which by \eqref{eq0} implies that $E(X_2)=\ol x_2$. Hence,
\[E(X)=\frac 1 A \iint(x_1 i+x_2 j) dx_1dx_2 =i \int x_1 f_1(x_1) dx_1+j \int x_2 f_2(x_2) dx_2=\ol x_1 i +\ol x_2j=(\ol x_1, \ol x_2),\]
and thus, the proof of the proposition is complete.
\end{proof}

\begin{figure}
\begin{tikzpicture}[line cap=round,line join=round,>=triangle 45,x=0.5 cm,y=0.5 cm]
\clip(-1.468633853040174,-4.9542037219953405) rectangle (8.83137139104701,8.69023360057558);
\draw (4.,0.)-- (0.,-4.);
\draw (6.,0.)-- (-1.,0.);
\draw (3.7908462683403227,0.1260583149492831) node[anchor=north west] {$1$};
\draw (-.75498054366825326,4.5491024772215235) node[anchor=north west] {$1$};
\draw (-1.47498054366825326,-3.443333586768474) node[anchor=north west] {$-1$};
\draw [->] (-1.,0.) -- (6.,0.);
\draw (-.97498054366825326,-0.0000521803249616) node[anchor=north west] {O};
\draw [shift={(0.,0.)}] plot[domain=0.:1.5707963267948966,variable=\t]({1.*4.*cos(\t r)+0.*4.*sin(\t r)},{0.*4.*cos(\t r)+1.*4.*sin(\t r)});
\draw [->] (0.,-4.79150771869559) -- (0.,6.);
\draw [color=qqqqff](2.8111939029040953,3.97162413883067) node[anchor=north west] {$x_2=\sqrt{1-x_1^2}$};
\draw [color=qqqqff](1.702237331004131,-1.6176811164505027) node[anchor=north west] {$x_2=x_1-1$};
\end{tikzpicture}
\caption{The bounded region described in Example~\ref{example1}} \label{Fig2}
\end{figure}
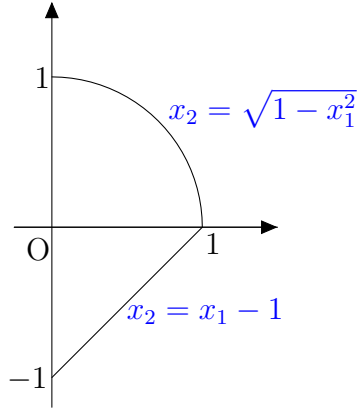

In support of the proposition, we give the following example.
\begin{example} \label{example1} Let us consider a lamina with uniform density $\rho$ which occupies a region $\F R$ in the plane bounded by the circle $x_1^2+x_2^2=1$, and the lines $x_2=x_1-1$ and $x_1=0$ (see Figure~\ref{Fig2}). Let $A$ be the area of the region. Then, $A=\frac \pi 4+\frac 12=\frac {2+\pi}{4}$. Let $(\ol x_1, \ol x_2)$ be the centroid of the region $\F R$.
 Here $f(x_1)=\sqrt{1-x_1^2}$ and $g(x_1)=x_1-1$. Then, using the formulas given by \eqref{eq0}, we have
\begin{align*} \ol x_1&=\frac 1A \int_{0}^1 x_1(f(x_1)-g(x_1))dx_1=\frac{2}{2+\pi }, \te{ and }\\
\ol x_2& =\frac 1 A\int_{0}^1 \frac 12([f(x_1)]^2-[g(x_1)]^2) dx_1=\frac{2}{3 (2+\pi )}.
\end{align*}
Let $f(x_1, x_2)$ be the pdf of a bivariate continuous random variable $X:=(X_1, X_2)$ with uniform distribution taking values on $\F R$.  Then,
$f(x_1, x_2)=\frac 1 A$ for $(x_1, x_2) \in \F R$, and $f(x_1, x_2)=0$ if $(x_1, x_2) \not \in \F R$, where
\[\F R=\set{(x_1, x_2) : -1\leq x_2\leq 0, \, 0\leq x_1\leq g^{-1}(x_2)}\uu \set{(x_1, x_2) : 0\leq x_2\leq 1, \, 0\leq x_1\leq f^{-1}(x_2)}.\]
Let $f_1(x_1)$ and $f_2(x_2)$ be the marginal distributions of $X_1$ and $X_2$, respectively. Then,
\[f_1(x_1)=\int_{g(x_1)}^{f(x_1)} f(x_1, x_2) dx_2=\frac 1 A (f(x_1)-g(x_1))=\frac 1 A (\sqrt{1-x^2}-(x-1)) \te{ for } 0\leq x_1\leq 1,\]
and
\[f_2(x_2)=\left\{\begin{array} {ll}
\int_0^{g^{-1}(x_2)} f(x_1, x_2) dx_2=\frac 1 A (x_2+1) & \te{ for } -1\leq x_2\leq 0,\\
\int_0^{f^{-1}(x_2)} f(x_1, x_2) dx_2=\frac 1 A \sqrt{1-x_2^2} & \te{ for } 0\leq x_2\leq 1.
\end{array} \right.\]
Thus,
\[E(X_1)=\frac 1 A \int_0^1 x_1(\sqrt{1-x^2}-(x-1))\,dx_1=\frac{2}{2+\pi}, \te{ and } \]
\[E(X_2)=\frac 1 A\Big(\int_{-1}^0 x_2(x_2+1) dx_2+\int_0^1 x_2\sqrt{1-x_2^2} \,dx_2\Big)=\frac{2}{3 (2+\pi )}, \]
implying $E(X_1)=\ol x_1$ and $E(X_2)=\ol x_2$.
\end{example}

If the bivariate continuous random variable $X:=(X_1, X_2)$ is not uniformly distributed on the region $\F R$, then the Proposition~\ref{prop1} is not true. In this regard, we give the following counter example.

\begin{example} \label{ex2}
Let $\F R$ be the square with vertices $O(0, 0)$, $A(1,0)$, $B(1, 1)$, and $C(0, 1)$ occupied by a lamina with uniform density $\rho$. Then, its area $A$ is given by $A=1$. Let $(\ol x_1, \ol x_2)$ be its center of mass. Then, we see that
$(\ol x_1, \ol x_2)=(\frac 12, \frac 12).$
Let $X:=(X_1, X_2)$ be a bivariate continuous random variable with probability density function $f(x_1, x_2)$ taking values on the square $\F R$ given by
\[f(x_1, x_2)=\left\{\begin{array} {cc}
4 x_1x_2 & \te{ if } (x_1, x_2) \in \F R, \\
0 & \te{ otherwise}.
\end{array}\right. \]
Then, if $f_1(x_1)$ and $f_2(x_2)$ are marginal pdfs of $X_1$ and $X_2$, respectively, we have
\[f_1(x_1)=\left\{\begin{array}{ll }
2x_1 & \te{ for } 0<x_1<1,\\
0 & \te{ otherwise},
\end{array}
\right.
\te{ and } f_2(x_2)=\left\{\begin{array}{ll }
2x_2 & \te{ for } 0<x_2<1,\\
0 & \te{ otherwise}.
\end{array}
\right.
\]
Thus, we see that $E(X_1)=\int_0^1 x_1f(x_1) dx_1=\frac 23$, and $E(X_2)=\int_{0}^1 x_2f_2(x_2)dx_2=\frac 23$, implying
\[E(X)=\frac 1 A \iint(x_1 i+x_2 j) dx_1dx_2 =i \int x_1 f_1(x_1) dx_1+j \int x_2 f_2(x_2) dx_2=\frac 23  i +\frac 23j=(\frac 23, \frac 23),\]
i.e., $E(X):=(E(X_1), E(X_2))=(\frac 23, \frac 23)\neq (\ol x_1, \ol x_2)$.
\end{example}

In the following proposition we use Proposition~\ref{prop1} and prove a conjecture given by Roychowdhury (see \cite[Conjecture~2.7]{R}).

\begin{prop} \label{prop2}
The Voronoi regions of the points in an optimal set of two-means with respect to the uniform distribution defined on a disc partition the disc into two regions bounded by the semicircles.
\end{prop}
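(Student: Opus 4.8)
The plan is to combine the rotational symmetry of the disc with the centroid condition for optimal quantizers supplied by Proposition~\ref{prop1}. Let $D$ be the disc, with center $O$ and radius $r$, carrying the uniform law $P$. For any two--point set $\ga=\set{a_1,a_2}$ the Voronoi diagram consists of the two half--planes separated by the perpendicular bisector $\ell$ of the segment $a_1a_2$, so $\ell$ meets $D$ in a chord that splits it into two pieces $R_1,R_2$. Because $D$ and $P$ are invariant under rotations about $O$, I may rotate so that $\ell$ is the vertical line $x_1=s$; the whole family of candidate partitions is then parametrized by the single number $s\in(-r,r)$. Each piece is the intersection of $D$ with a vertical half--plane, hence is symmetric under the reflection $x_2\mapsto-x_2$, so by Proposition~\ref{prop1} its centroid lies on the $x_1$--axis; since optimality forces $a_i$ to equal that centroid, we have $a_1=(\xi_1(s),0)$ and $a_2=(\xi_2(s),0)$. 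The claim that the dividing chord is a diameter is then exactly the claim that $s=0$.

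Next I would convert the distortion into a between--class variance. Write $w_i(s)=P(R_i)$, let $m(s)=\int_{R_2}x_1\,dP$ be the first moment of the right piece (so the first moment of $R_1$ is $-m(s)$, because $O$ is the centroid of $D$), and set $C=\int_D\|x\|^2\,dP$. For a fixed chord the cost of the optimal (centroidal) centers is, by the law of total variance,
\[
\int_{R_1}\|x-a_1\|^2\,dP+\int_{R_2}\|x-a_2\|^2\,dP=C-B(s),\qquad B(s):=\frac{m(s)^2}{w_1(s)\,w_2(s)}.
\]
For an arbitrary two--point set the left--hand sum can only decrease when each $a_i$ is replaced by the centroid of $R_i$, so $V(\ga)\ge C-B(s)\ge C-\max_{s}B(s)$ for every $\ga$. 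Hence it suffices to prove that $B(s)$ attains a strict maximum at $s=0$: the configuration $s=0$ has centroids $(\mp 4r/3\pi,0)$, which are symmetric about $O$ and therefore have $x_1=0$ as their own perpendicular bisector, so it is self--consistent and attains the bound $C-B(0)$. Consequently the two half--discs are the optimal Voronoi regions, and the strictness forces every optimal set to have $s=0$.

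The computational heart, and the main obstacle, is showing that $B(s)$ is strictly maximized at $s=0$. Using the standard circular--segment integrals for $R_2=\set{(x_1,x_2)\in D : x_1>s}$, namely
\[
w_2(s)=\frac{1}{\pi r^2}\Big(r^2\arccos\tfrac{s}{r}-s\sqrt{r^2-s^2}\Big),\qquad m(s)=\frac{2}{3\pi r^2}\,(r^2-s^2)^{3/2},
\]
together with $w_1=1-w_2$, the quantity $B(s)=m(s)^2/\big(w_1(s)w_2(s)\big)$ is an explicit even function of $s$ with $B(0)=16r^2/(9\pi^2)$. One then has to verify that $B'(s)<0$ for $s\in(0,r)$, i.e.\ that $B$ is unimodal with its peak at the origin; this single--variable estimate, involving the transcendental term $\arccos(s/r)$, is the genuinely delicate step and the place where care is needed. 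Two smaller points must also be settled: that reducing to a vertical chord by rotation loses no generality (immediate from the rotation--invariance of $P$), and that the strict maximality at $s=0$ is precisely what upgrades ``some optimal set splits $D$ along a diameter'' to ``every optimal set does,'' which is what the proposition asserts.
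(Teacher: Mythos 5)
Your reduction is sound and takes a genuinely different route from the paper. The paper argues locally: by rotational symmetry it places the separating chord $AB$ horizontally at height $b=\sqrt{1-a^2}$, computes via Proposition~\ref{prop1} the centroids $P=(0,u_2)$ and $Q=(0,v_2)$ of the two pieces, imposes the self-consistency (CVT) condition that the midpoint of $PQ$ lie on $AB$, and solves the resulting transcendental equation $\tfrac12(u_2+v_2)=\sqrt{1-a^2}$ to get $a=1$; since an optimal set exists and must satisfy this necessary condition, the diameter configuration is the only candidate. You instead argue globally: the identity $V(\ga)\ge C-B(s)$ with $B(s)=m(s)^2/(w_1w_2)$ is correct (your segment formulas for $w_2$ and $m$ check out, as does $B(0)=16r^2/9\pi^2$ and the self-consistency of the $s=0$ configuration), and if $B$ has a strict maximum at $s=0$ this yields global optimality and uniqueness in one stroke, which is in some ways stronger than the paper's critical-point analysis.

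The genuine gap is that the entire proof now rests on the claim $B'(s)<0$ for $s\in(0,r)$, and you have not proved it --- you have only named it as ``the genuinely delicate step.'' This is not a formality: writing $w=w_2$, one has $m'=-\tfrac{2s}{\pi r^2}\sqrt{r^2-s^2}$ and $w'=-\tfrac{2}{\pi r^2}\sqrt{r^2-s^2}$, so the numerator of $B'$ is $2mm'w(1-w)-m^2w'(1-2w)$, in which the first term is negative but the second is \emph{positive} for $s>0$ (since $w<\tfrac12$ and $w'<0$); the needed inequality is a nontrivial cancellation involving $\arccos(s/r)$, not a sign inspection. Until that one-variable estimate is carried out, the argument establishes only that the diameter configuration is a self-consistent candidate attaining $C-B(0)$, not that it is optimal. (To be fair, the paper's own ``solving it, we have $a=1$'' also compresses a transcendental uniqueness argument; but the paper's equation at least asserts its conclusion, whereas your write-up explicitly leaves the decisive inequality open.) A complete version of your proof should either prove the monotonicity of $B$ on $(0,r)$ directly, or fall back on the paper's strategy of showing the CVT equation has $a=1$ as its only root. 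You should also add one line disposing of two-point sets whose perpendicular bisector misses the disc (then one Voronoi cell carries all the mass and the cost is at least $V_1>C-B(0)$), so that the parametrization by $s\in(-r,r)$ covers all competitors.
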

\begin{proof}
It is enough to prove the proposition for the disc bounded by the circle given by the equation $x_1^2+x_2^2=1$. Let $P$ and $Q$ be the two points in an optimal set of two-means with respect to the uniform distribution on the disc. Let $\rho$ be the density, i.e., mass per unit area of the disc. Due to rotational symmetry of the disc about its center, without any loss of generality, we can assume that the boundary of the Voronoi regions of the points $P$ and $Q$ cut the circle at the points $A$ and $B$, and the line $AB$ is parallel to the $x_1$-axis. Thus, we can take the coordinates of $A$ and $B$ as $(-a, b)$ and $(a, b)$, respectively, where $a^2+b^2=1$. Notice that due to rotational symmetry of the disc we can assume that $b\geq 0$. Moreover, we can assume that $P$ is above the line $AB$, and $Q$ is below the line $AB$.  Due to Proposition~\ref{prop1}, we use the formulas given by \eqref{eq0} to calculate the locations of $P$ and $Q$. Let $(u_1, u_2)$ and $(v_1, v_2)$ be the coordinates of $P$ and $Q$, respectively. Here there is no need to use the formula to calculate $u_1$ and $v_1$ because, by the symmetry principle, the center of mass must lie on the $x_2$-axis, so $u_1=v_1=0$. Now, we calculate $u_2$ and $v_2$ as follows:
\[u_2=\frac{  \int_{-a}^a \frac{1}{2} \left(\left(1-x_1^2\right)-\left(1-a^2\right)\right) \, dx_1}{  \int_{-a}^a \left(\sqrt{1-x_1^2}-\sqrt{1-a^2}\right) \, dx_1}=\frac{2 a^3}{3 \left(\sin ^{-1}(a)-a \sqrt{1-a^2}\right)},\] and
\begin{align*} v_2&=\frac{  0+    \int_{-a}^a \frac{1}{2} \left(\left(1-a^2\right)-\left(1-x_1^2\right)\right) \, dx_1+  0}{   \int_{-1}^{-a} 2 \sqrt{1-x_1^2} \, dx_1+   \int_{-a}^a \left(\sqrt{1-a^2}+\sqrt{1-x_1^2}\right) \, dx_1+   \int_a^1 2 \sqrt{1-x_1^2} \, dx_1}\\
&=-\frac{2 a^3}{3 \left(a \sqrt{1-a^2}+\sin ^{-1}(a)+2 \cos ^{-1}(a)\right)}.
\end{align*}
Since the boundary of the Voronoi regions of any two points in an optimal set is the perpendicular bisector of the line segment joining the two points, we can say that the point $(\frac 12(u_1+v_1), \frac 12(u_2+v_2))$ lies on the line $AB$ yielding
\[\frac 12(u_2+v_2)=b=\sqrt{1-a^2}.\]  Putting the values of $u_2$ and $v_2$ in the above equation, and solving it, we have $a=1$, and so $b=0$ implying the fact that the line $AB$ coincides with the $x_1$-axis. In other words, the boundary of the Voronoi regions of the two points $P$ and $Q$ coincides with a diagonal of the circle. Thus, the proof of the proposition is complete.
\end{proof}

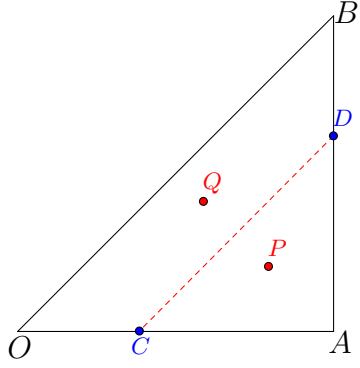
\begin{figure}
\begin{tikzpicture}[line cap=round,line join=round,>=triangle 45,x=0.7 cm,y=0.7 cm]
\clip(-0.256773307992824,-0.521997085840988) rectangle (6.891616935909634,6.459113250976666);
\draw (0.,0.)-- (6.,0.);
\draw (6.,0.)-- (6.,6.);
\draw (0.,0.)-- (6.,6.);
\draw (-.40104318445781938,0.11768049836342453) node[anchor=north west] {$O$};
\draw (5.696146552634362,0.21768049836342453) node[anchor=north west] {$A$};
\draw (5.796146552634362,6.47539013831697) node[anchor=north west] {$B$};
\draw [dash pattern=on 2pt off 2pt,color=ffqqqq] (5.997210431844581,3.717891415901172)-- (2.3118399324252987,0.007786886284447148);
\begin{scriptsize}
\draw [fill=ffqqqq] (4.76393,1.23607) circle (1.5pt);
\draw[color=ffqqqq] (4.9336471333544525,1.5907648189209163) node {$P$};
\draw [fill=ffqqqq] (3.52786,2.47214) circle (1.5pt);
\draw[color=ffqqqq] (3.6969456234822102,2.8274663287931583) node {$Q$};
\draw [fill=qqqqff] (2.3118399324252987,0.007786886284447148) circle (1.5pt);
\draw[color=qqqqff] (2.3365739626227437,-0.26535132509766667) node {$C$};
\draw [fill=qqqqff] (5.997210431844581,3.717891415901172) circle (1.5pt);
\draw[color=qqqqff] (6.170348643226695,4.0641678386654) node {$D$};
\end{scriptsize}
\end{tikzpicture}
\caption{$P$ and $Q$ form a centroidal Voronoi tessellation.}\label{Fig3}
\end{figure}

In the sequel by the triangle it it meant the lamina bounded by the triangle, and by the isosceles trapezoid it is meant the lamina bounded by the isosceles trapezoid. We now state and prove the following proposition.

\begin{prop} \label{prop3}
If an isosceles triangle is partitioned into an isosceles triangle and an isosceles trapezoid in the Golden ratio, then their centers of mass form a centroidal Voronoi tessellation of the triangle.
\end{prop}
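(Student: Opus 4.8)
The plan is to identify $P$ and $Q$ with the centers of mass of the two pieces (the triangular piece and the trapezoidal piece) and to show that their common boundary $CD$ is exactly the perpendicular bisector of the segment $PQ$. Once this is established, the Voronoi regions $M(P\mid\{P,Q\})$ and $M(Q\mid\{P,Q\})$ coincide (within the triangle) with the sub-triangle and the trapezoid respectively; since $P$ and $Q$ are by hypothesis the centroids of these two regions, Proposition~\ref{prop1} identifies each generator with the expected vector of the uniform distribution on its own Voronoi region, which is precisely the definition of a centroidal Voronoi tessellation. So the entire content of the proposition is the single geometric fact that $CD$ bisects $PQ$ perpendicularly.

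First I would normalize the configuration: take the isosceles triangle with apex $A$, let the cut $CD$ be parallel to the base so that it separates off a similar sub-triangle with apex $A$, and use the axis of symmetry $\ell$ of the triangle (the altitude from $A$, which is perpendicular to the base and hence to $CD$) as reference line. Both the sub-triangle and the trapezoid are symmetric about $\ell$, so their centers of mass $P$ and $Q$ both lie on $\ell$; consequently the segment $PQ$ lies along $\ell$ and is automatically perpendicular to $CD$. This is the key simplification: perpendicularity comes for free from the isosceles hypothesis, so the only remaining requirement for $CD$ to be the perpendicular bisector of $PQ$ is that the midpoint of $PQ$ land on $CD$, and the problem reduces to a one-dimensional computation along $\ell$.

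Then I would carry out that computation by measuring distance along $\ell$ from the apex $A$. Let $h$ be the altitude of the full triangle and $d$ the altitude of the sub-triangle, so that the cut sits at distance $d$ from $A$ and the linear scale of the sub-triangle is $r=d/h$. Using the elementary facts that a triangle's centroid lies at $\tfrac23$ of its altitude from the apex and that similar triangles have areas in the ratio $d^2:h^2$, I would place $P$ at distance $\tfrac23 d$ from $A$ and recover $Q$ from the moment identity $T\cdot\tfrac23 h = S\cdot\tfrac23 d+(\text{trapezoid area})\cdot(\text{distance of }Q)$, where $T\propto h^2$ and $S\propto d^2$; this gives $Q$ at distance $\tfrac23\,(h^3-d^3)/(h^2-d^2)$ from $A$. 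Imposing that the midpoint $\tfrac12\big(\tfrac23 d+\tfrac23\,(h^3-d^3)/(h^2-d^2)\big)$ equal $d$ and clearing denominators reduces to $h^3+d^3-2dh^2=0$, that is, $r^3-2r+1=0$.

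Finally I would factor $r^3-2r+1=(r-1)(r^2+r-1)$; discarding the degenerate root $r=1$ and the negative root leaves $r=\tfrac{\sqrt5-1}{2}=1/\varphi$, where $\varphi=\tfrac{1+\sqrt5}{2}$ is the golden ratio, so the cut divides the altitude (equivalently the lateral side, since $C$ then satisfies $OA:CA=CA:OC=\varphi$) in the golden ratio. Conversely, for exactly this $r$ the midpoint condition holds, $CD$ is the perpendicular bisector of $PQ$, and the CVT conclusion follows as described in the first paragraph. I expect the only genuine labor to be the trapezoid-centroid moment computation and the bookkeeping showing the midpoint condition is equivalent to the cubic; the conceptual point — that symmetry makes perpendicularity automatic, so the golden ratio is precisely the value that places the midpoint of $PQ$ on the cut — is what makes the statement true, and I would flag the factorization $r^3-2r+1=(r-1)(r^2+r-1)$ as the step to get exactly right.
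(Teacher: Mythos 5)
Your proof is correct and reaches the paper's answer by the same basic strategy --- compute the two centers of mass, require the cut to be the perpendicular bisector of the segment $PQ$, and solve for the golden ratio --- but your execution differs in ways worth recording. The paper works in explicit coordinates with the particular right isosceles triangle $O(0,0)$, $A(1,0)$, $B(1,1)$, obtains the trapezoid's centroid by direct integration via \eqref{eq0}, and imposes only the midpoint condition $\tfrac12(u_2+v_2)=\tfrac12(u_1+v_1)-1+\ga$, leaving implicit the fact that $PQ$ is perpendicular to $CD$; you make that perpendicularity explicit (both pieces are symmetric about the altitude from the apex, so $P$ and $Q$ lie on that axis, which is orthogonal to the cut), which is the right thing to say. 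You also replace the integration by the moment identity (whole $=$ area-weighted sum of the parts), reducing everything to the one-dimensional equation $r^3-2r+1=(r-1)(r^2+r-1)=0$ along the altitude; your root $r=\tfrac12(\sqrt5-1)$ agrees with the paper's $\ga$, and I checked that your trapezoid centroid reproduces the paper's $v_1,v_2$ exactly. A genuine advantage of your version is that it is scale- and shape-invariant: it proves the statement for an arbitrary isosceles triangle, whereas the paper's opening reduction ``it is enough to prove the proposition for'' one specific triangle is not literally justified, since isosceles triangles are not all similar to one another; your argument shows the cubic, and hence the conclusion, is independent of the apex angle, closing that small gap. The only step you should spell out when writing this up is the one you already flag, the trapezoid moment computation giving $Q$ at distance $\tfrac23(h^3-d^3)/(h^2-d^2)$ from the apex; everything else is as routine as you predict.
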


\begin{proof} Let $\tri$ be the isosceles triangle with uniform density $\rho$. It is enough to prove the proposition for the triangle with vertices $O(0, 0)$, $A(1, 0)$, and $B(1,1)$ (see Figure~3). Let $\ell$ be the line which partitions $\tri$ into an isosceles triangle and an isosceles trapezoid. Let $\ell$ intersects the sides $OA$ and $AB$ at the points $C$ and $D$, respectively. Let $AC=AD=\ga$. Then, the area of the triangle $ACD$ is $\frac 12 \ga^2$, and so, the area of the isosceles trapezoid is $\frac 12(1-\ga^2)$. The equation of the line $OB$ is $x_2=x_1$, and the equation of the line $CD$ is $x_2=x_1-1+\ga$. Let $P(u_1, u_2)$ and $Q(v_1, v_2)$ be the centers of mass of the triangle $ACD$ and the isosceles trapezoid, respectively. Then, $(u_1, u_2)=(\frac{3-\ga} 3, \frac{\ga}3)$. Now, using \eqref{eq0}, we have
\begin{align*}
v_1=\frac{   \int_0^{1-\alpha } x_1 \left(x_1-0\right) \, dx_1+   \int_{1-\alpha }^1 x_1 \left(x_1-\left(\alpha +x_1-1\right)\right) \, dx_1}{   \int_0^{1-\alpha } \left(x_1-0\right) \, dx_1+   \int_{1-\alpha }^1 \left(x_1-\left(\alpha +x_1-1\right)\right) \, dx_1}=\frac{-\alpha ^2+2 \alpha +2}{3 \alpha +3},
\end{align*}
and
\[v_2=\frac{   \int_0^{1-\alpha } \frac{1}{2} \left(x_1^2-0^2\right) \, dx_1+   \int_{1-\alpha }^1 \frac{1}{2} \left(x_1^2-\left(\alpha +x_1-1\right){}^2\right) \, dx_1}{   \int_0^{1-\alpha } \left(x_1-0\right) \, dx_1+   \int_{1-\alpha }^1 \left(x_1-\left(\alpha +x_1-1\right)\right) \, dx_1}=\frac{\alpha ^2+\alpha +1}{3 \alpha +3}.\]
If $P$ and $Q$ form a Centroidal Voronoi tessellation, then the line $CD$ must be the perpendicular bisection of the line segment joining $P$ and $Q$. Thus, we have \[\frac 12(u_2+v_2)=\frac 12(u_1+v_1)-1+\ga.\]
Next, putting the values of $u_1, u_2, v_1$ and $v_2$, and solving the equation, we have $\ga=\frac{1}{2} (\sqrt{5}-1)$, which is the Golden ratio. Since $\ga^2+\ga=1$, we see that
\[\frac{\te{Area of the isosceles triangle } BCD}{ \te{Area of the isosceles trapezoid } OCDB }=\frac {\frac 12 \ga^2}{\frac 12(1-\ga^2)}=\ga.\]
Thus, the proof of the proposition is complete.
\end{proof}

Let $\F R$ be the region occupied by a lamina with uniform density such that the boundary of $\F R$ forms a rhombus. Then, it can be seen that the center of mass of the lamina is located at the center of the rhombus, i.e., at the point where the two diagonals intersect, in other words, the optimal set of one-mean with respect to the uniform distribution defined on the region $\F R$ consists of the center of the rhombus. The following proposition gives the optimal sets of two-means and the corresponding quantization error with respect to the uniform distribution defined on such a region $\F R$.

\begin{prop}\label{prop4}.
Let $\F R$ be the region bounded by the rhombus with vertices $O(0, 0)$, $A(1, 0)$, $B(1+\frac 1 {\sqrt 2}, \frac 1{\sqrt{2}})$ and $C(\frac 1{ \sqrt 2}, \frac 1 {\sqrt 2})$. Then, with respect to the uniform distribution the optimal set of two-means is the set $\set{(\frac{1}{3} (1+\frac{1}{\sqrt{2}}),\frac{1}{3 \sqrt{2}}), (\frac{1}{3} (\sqrt{2}+2),\frac{\sqrt{2}}{3})}$, and the corresponding quantization error is $V_2=\frac{2 \sqrt{2}-1}{18 \sqrt{2}}=0.0718274$.
\end{prop}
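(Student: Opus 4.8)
The plan is to combine the centroidal characterization from Proposition~\ref{prop1} with the two reflection symmetries of the rhombus, thereby cutting the problem down to a comparison of two explicitly computable configurations, and then to evaluate the distortion of each.

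First, I would record the relevant symmetries of $\F R$. The rhombus is centrally symmetric about the intersection point $\mathcal{O}_0$ of its diagonals and is invariant under the reflection $T_1$ in the long diagonal $OB$ and the reflection $T_2$ in the short diagonal $AC$; moreover $OB\perp AC$ and each diagonal bisects the area. By Proposition~\ref{prop1}, in any optimal set of two-means the two quantizers are the centroids of their Voronoi regions, which are the two halves of $\F R$ cut off by the perpendicular bisector $\ell$ of the segment joining the two points. Since the uniform distribution on $\F R$ is invariant under $T_1$ and $T_2$, each $T_i$ carries an optimal set to an optimal set, so $T_i$ either fixes both quantizers or interchanges them. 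Running through the four cases and discarding the two degenerate ones (both points collapsing to $\mathcal{O}_0$, and the case that would force $\ell$ to equal both diagonals at once) leaves exactly two admissible configurations: the two quantizers lie on $OB$ with $\ell=AC$, or they lie on $AC$ with $\ell=OB$. In both the points are symmetric about $\mathcal{O}_0$ and $\F R$ is split by a diagonal into two congruent triangles.

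Next, for each candidate I would compute the quantizers and the error directly from \eqref{eq0}. For the split by $AC$ the two regions are the triangles $OAC$ and $ABC$, whose centroids are $(\frac13(1+\frac1{\sqrt2}),\frac1{3\sqrt2})$ and $(\frac13(\sqrt2+2),\frac{\sqrt2}{3})$, i.e.\ the asserted points; one checks that $AC$ is indeed the perpendicular bisector of the segment joining them, so this is a genuine centroidal Voronoi tessellation. Evaluating $\frac1A\int_{\F R}\min_i\|x-P_i\|^2\,dx_1dx_2$ (equivalently, adding the second moments of the two triangles about their centroids) gives $V_2=\frac{4-\sqrt2}{36}=\frac{2\sqrt2-1}{18\sqrt2}$. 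The identical computation for the split by $OB$, whose pieces are the triangles $OAB$ and $OCB$, yields $\frac{4+\sqrt2}{36}$. Since $\frac{4-\sqrt2}{36}<\frac{4+\sqrt2}{36}$, the split along the short diagonal $AC$ wins; this is geometrically natural, as it separates $\F R$ across the direction of greatest spread, namely the long diagonal.

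The main obstacle is precisely the reduction step: proving rigorously that the dividing line $\ell$ must be a diagonal rather than some skew line through the interior. The symmetry argument above yields this only once one knows that the collection of optimal two-means is invariant under $T_1$ and $T_2$, which rests on uniqueness of the optimizer up to the symmetry group; establishing this, or else showing directly that every two-point centroidal Voronoi tessellation of $\F R$ is one of the two diagonal configurations, is the delicate part. A concrete route, in the spirit of Proposition~\ref{prop2}, is to fix $\ell$ perpendicular to a chosen diagonal, parametrize its signed distance from $\mathcal{O}_0$, compute the two centroids, impose the self-consistency condition that their midpoint lie on $\ell$, and verify that the unique solution is $\ell$ through $\mathcal{O}_0$; the remaining orientations are then excluded by the invariance/symmetrization argument. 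Once the candidate list is pinned down, the comparison of the two distortion values is entirely routine.
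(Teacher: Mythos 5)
Your two candidate configurations, their centroids, and the two distortion values $\frac{4-\sqrt2}{36}=\frac{2\sqrt2-1}{18\sqrt2}$ and $\frac{4+\sqrt2}{36}=\frac{2\sqrt2+1}{18\sqrt2}$ all agree with the paper, and the final comparison step is exactly the paper's conclusion (the paper even records in Remark~\ref{rem1} that the rhombus admits two distinct two-generator CVTs, so that comparison is genuinely needed). The problem is the reduction to these two configurations, which you yourself flag as the delicate part and do not close. The inference ``each $T_i$ carries an optimal set to an optimal set, so $T_i$ either fixes both quantizers or interchanges them'' is invalid without first knowing that the optimal pair is unique up to the symmetry group: a reflection could carry one optimal set to a \emph{different} optimal set, in which case the separating chord need not be symmetric at all (this is exactly the situation for the disc in Proposition~\ref{prop2}, where no individual optimal pair is invariant under the full rotation group). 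Your proposed repair does not fill the hole either: parametrizing only chords $\ell$ perpendicular to a chosen diagonal covers just two of the infinitely many directions a separating chord can have, and you then appeal back to ``the invariance/symmetrization argument'' to exclude the remaining orientations, which is circular since that is the argument being repaired.

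The paper closes this gap by exhaustion rather than symmetry. A chord splitting the convex region $\F R$ into two Voronoi cells must meet exactly two of the four sides, giving six cases (four adjacent pairs, two opposite pairs). In each case the two intersection points $D$ and $E$ are parametrized by lengths $\ga$ and $\gb$ along the sides, the two cell centroids $\tilde p$ and $\tilde q$ are written explicitly (the larger cell's centroid obtained by subtracting the smaller cell's weighted centroid from that of the whole rhombus), and the conditions $\rho(\tilde p,\tilde d)=\rho(\tilde q,\tilde d)$ and $\rho(\tilde p,\tilde e)=\rho(\tilde q,\tilde e)$ are imposed, i.e.\ that the boundary points $D,E$ are equidistant from the two generators, as they must be if $\ell$ is the perpendicular bisector of $PQ$. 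Solving these two equations in every case forces $\ell$ to be one of the two diagonals; that is precisely the classification your outline presupposes. To salvage your approach you would need either to prove uniqueness of the optimal pair up to symmetry beforehand (essentially as hard as the classification itself) or to run an exhaustive parametrization over all chords, which is what the paper does.
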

\begin{proof} Let $OABC$ be the rhombus with vertices $O(0, 0)$, $A(1, 0)$, $B(1+\frac 1 {\sqrt 2}, \frac 1{\sqrt{2}})$ and $C(\frac 1{ \sqrt 2}, \frac 1 {\sqrt 2})$. Let $f(x_1, x_2)$ be the probability density function of the random variable $X:=(X_1, X_2)$ taking values on the region $\F R$ bounded by the rhombus. Then, $f(x_1, x_2)=\sqrt 2$ for $(x_1, x_2) \in \F R$, and zero, otherwise. Let the position vectors of $A, \, B, \, C$ be $\tilde a, \, \tilde b,\, \tilde c$, respectively. Then, $\tilde a=(1, 0), \, \tilde b=(1+\frac 1{\sqrt 2},  \frac 1{\sqrt 2})$, and $\tilde c=(\frac 1{\sqrt 2}, \frac 1{\sqrt 2})$. Let $P$ and $Q$ be the locations of the two points in an optimal set of two-means. Let $\tilde p$ and $\tilde q$ be the position vectors of $P$ and $Q$, respectively.
Let $\ell$ be the boundary of the Voronoi regions of $P$ and $Q$. Then, the following cases can arise:

\begin{figure}
\begin{tikzpicture}[line cap=round,line join=round,>=triangle 45,x=1.0cm,y=1.0cm]
\draw [color=cqcqcq,, xstep=2.0cm,ystep=2.0cm] (-0.610876523952786,-0.748866757612501) grid (7.422994115258084,3.546052422119194);
\clip(-0.610876523952786,-0.748866757612501) rectangle (7.422994115258084,3.546052422119194);
\draw (0.,0.)-- (4.,0.);
\draw (4.,0.)-- (6.82842712474619,2.82842712474619);
\draw (0.,0.)-- (2.82842712474619,2.82842712474619);
\draw (2.82842712474619,2.82842712474619)-- (6.82842712474619,2.82842712474619);
\draw [line width=1.2pt,dotted,color=ffqqqq] (2.2999173553718992,2.2999173553718992)-- (2.7338842975206603,0.);
\draw (-0.5622568022052936,0.2333942415617488) node[anchor=north west] {$O$};
\draw [line width=1.2pt,dotted] (-0.22479338842975222,0.21305785123966936)-- (2.0892561983471065,2.527107438016527);
\draw [line width=1.2pt,dotted] (-0.009917355371901252,-0.2993388429752078)-- (2.7338842975206603,-0.2993388429752078);
\draw [line width=1.2pt,dotted] (2.7338842975206603,0.)-- (2.7338842975206603,-0.2993388429752078);
\draw [line width=1.2pt,dotted] (2.0892561983471065,2.527107438016527)-- (2.2999173553718992,2.2999173553718992);
\draw [line width=1.2pt,dotted] (0.,0.)-- (-0.22479338842975222,0.21305785123966936);
\draw [line width=1.2pt,dotted] (0.,0.)-- (-0.009917355371901252,-0.2993388429752078);
\draw (0.49354252538383114,1.8208289033870073) node[anchor=north west] {$\beta$};
\draw (1.008192906301418,-0.2578007374776609) node[anchor=north west] {$\alpha$};
\draw (3.959037729083406,0.20255345760400983) node[anchor=north west] {$A$};
\draw (6.802198643285615,3.17308646767143) node[anchor=north west] {$B$};
\draw (2.558606218653839,3.439104349170005) node[anchor=north west] {$C$};
\draw[color=ffqqqq]  (2.944387348165819,0.179898711936393) node {$D$};
\draw[color=ffqqqq] (2.558606218653839,2.3085283528010616) node {$E$};
\end{tikzpicture}
\caption{}\label{Fig4}
\end{figure}

Case~1. $\ell$ intersects the sides $OA$ and $OC$.

Let $\ell$ intersect $OA$ and $OC$ at the points $D$ and $E$, such that the lengths of $OD$ and $OE$ be $\ga$ and $\gb$, respectively, with their position vectors $\tilde d$ and $\tilde e$ (see Figure~\ref{Fig4}). Then, $\tilde d=\alpha  \tilde a$, $\tilde e=\beta  \tilde c$, $\tilde p=\frac{\tilde d+\tilde e}{3}$, and
\[\tilde q=\frac{\frac{1}{\sqrt{2}}\cdot \frac 12 (1+\frac{1}{\sqrt{2}}, \frac{1}{\sqrt{2}})-\frac{\alpha  \beta }{2 \sqrt{2}} \tilde p}{\frac{1}{\sqrt{2}} (1-\frac{\alpha  \beta }{2})},\]
where the location of the center of mass of the lamina is $\frac 12 (1+\frac{1}{\sqrt{2}}, \frac{1}{\sqrt{2}})$.
Since $\ell$ is the boundary of the Voronoi regions of $P$ and $Q$, it is the perpendicular bisector of the line segment joining the points $P$ and $Q$. Thus, we have \[\rho(\tilde p, \tilde d)-\rho(\tilde q, \tilde d)=0, \te{ and } \rho(\tilde p, \tilde e)-\rho(\tilde q, \tilde e)=0.\] Solving the above two equations, we have $\ga=1$ and $\gb=1$, which implies the fact that the line $\ell$ is the diagonal $AC$ of the rhombus. Then, we have $\tilde p=(\frac{1}{3} (1+\frac{1}{\sqrt{2}}),\frac{1}{3 \sqrt{2}})$, $\tilde q=(\frac{2}{3} (1+\frac{1}{\sqrt{2}}),\frac{\sqrt{2}}{3})$. Notice that the equation of the line $OC$ is $x_2=x_1$, and the equation of the line $AC$ is $x_2=-(\sqrt 2+1)(x_1-1)$. Thus, if $V_2(\te{Case }1)$ is the distortion error in this case, then, due to symmetry of the rhombus with respect to the diagonal $AC$, we have
\begin{align*}
V_2(\te{Case }1)=2 \sqrt 2  \int _0^{\frac{1}{\sqrt{2}}}\int _{x_2}^{1-\frac{x_2}{\sqrt{2}+1}}\rho((x_1, x_2), \tilde p) dx_1dx_2= \frac{2 \sqrt{2}-1}{18 \sqrt{2}}=0.0718274.
\end{align*}

Case~2. $\ell$ intersects the sides $AB$ and $BC$.

This case is the reflection of Case~1 with respect to the diagonal $AC$, and thus, we obtain the same set of solutions and the corresponding distortion error as in Case~1.

Case~3. $\ell$ intersects the sides $OA$ and $AB$.

Let $\ell$ intersect $OA$ and $AB$ at the points $D$ and $E$ such that the lengths of $AD$ and $AE$ be $\ga$ and $\gb$, respectively, with their position vectors $\tilde d$ and $\tilde e$. Then, $\tilde d=(1-\alpha)  \tilde a$, $\tilde e=\beta  \tilde b+(1-\gb)\tilde a$, $\tilde p=\frac{1}{3} (\tilde a+\tilde d+\tilde e)$, and
\[\tilde q=\frac{\frac{1}{\sqrt{2}}\cdot \frac 12 (1+\frac{1}{\sqrt{2}}, \frac{1}{\sqrt{2}})-\frac{\alpha  \beta }{2 \sqrt{2}} \tilde p}{\frac{1}{\sqrt{2}} (1-\frac{\alpha  \beta }{2})},\]
where the location of the center of mass of the lamina is $\frac 12 (1+\frac{1}{\sqrt{2}}, \frac{1}{\sqrt{2}})$. As Case~1, we have
\[\rho(\tilde p, \tilde d)-\rho(\tilde q, \tilde d)=0, \te{ and } \rho(\tilde p, \tilde e)-\rho(\tilde q, \tilde e)=0.\]
Solving the two equations, we have $\ga=1$ and $\gb=1$, which implies the fact that the line $\ell$ is the diagonal $OB$ of the rhombus. Then, we have $\tilde p=(\frac{1}{3} (2+\frac{1}{\sqrt{2}}),\frac{1}{3 \sqrt{2}})$, $\tilde q=(\frac{1}{3} \left(\sqrt{2}+1\right),\frac{\sqrt{2}}{3})$. Notice that the equation of the line $AB$ is $x_2=x_1-1$, and the equation of the line $OB$ is $x_2=(\sqrt 2-1)x_1$. Thus, if $V_2(\te{Case }3)$ is the distortion error in this case, then, due to symmetry of the rhombus with respect to the diagonal $OB$, we have
\begin{align*}
V_2(\te{Case }3)=2 \sqrt 2 \int _0^{\frac{1}{\sqrt{2}}}\int _{(\sqrt{2}+1) x_2}^{x_2+1}\rho((x_1, x_2), \tilde p) dx_1dx_2=\frac{2 \sqrt{2}+1}{18 \sqrt{2}}=0.150395.
\end{align*}

\begin{figure}
\begin{tikzpicture}[line cap=round,line join=round,>=triangle 45,x=1.0cm,y=1.0cm]
\draw [color=cqcqcq,, xstep=2.0cm,ystep=2.0cm] (-0.610876523952786,-1.071034914404049) grid (7.222994115258084,4.1238842653276455);
\clip(-0.610876523952786,-1.071034914404049) rectangle (7.222994115258084,4.1238842653276455);
\draw (0.,0.)-- (4.,0.);
\draw (4.,0.)-- (6.82842712474619,2.82842712474619);
\draw (0.,0.)-- (2.82842712474619,2.82842712474619);
\draw (2.82842712474619,2.82842712474619)-- (6.82842712474619,2.82842712474619);
\draw (-0.4822568022052936,0.3133942415617488) node[anchor=north west] {$O$};
\draw (0.765858616804052,-0.3299688942692095) node[anchor=north west] {$\alpha$};
\draw (3.959037729083406,0.20255345760400983) node[anchor=north west] {$A$};
\draw (6.752198643285615,3.1308646767143) node[anchor=north west] {$B$};
\draw (2.2862901272336185,3.1725998787953606) node[anchor=north west] {$C$};
\draw [color=ffqqqq](5.79639931569649,2.8835730565474983) node[anchor=north west] {$E$};
\draw [color=ffqqqq](1.618964936774825,0.534589220601159) node[anchor=north west] {$D$};
\draw [line width=1.2pt,dotted] (0.,0.)-- (0.,-0.3738186189762358);
\draw [line width=1.2pt,dotted] (2.,0.)-- (2.0139740358133977,-0.3516504621846886);
\draw [line width=1.2pt,dotted,color=ffqqqq] (5.985250369024952,2.82842712474619)-- (2.,0.);
\draw [line width=1.2pt,dotted] (0.,-0.3738186189762358)-- (2.0139740358133977,-0.3516504621846886);
\draw (4.12442374432011,3.793794857834771) node[anchor=north west] {$\beta$};
\draw [line width=1.2pt,dotted] (2.7855362948373568,3.128750154088334)-- (2.8309223100740604,3.1509183108798813);
\draw [line width=1.2pt,dotted] (2.8309223100740604,3.1509183108798813)-- (5.985250369024952,3.1509183108798813);
\draw [line width=1.2pt,dotted] (5.985250369024952,3.1509183108798813)-- (5.985250369024953,2.8284271247461903);
\draw [line width=1.2pt,dotted] (2.8284271247461903,2.8284271247461903)-- (2.8309223100740306,3.1509183108798817);
\end{tikzpicture}
\caption{}\label{Fig5}
\end{figure}

Case~4. $\ell$ intersects the sides $OC$ and $BC$.

This case is the reflection of Case~3 with respect to the diagonal $OB$, and thus, we obtain the same set of solutions and the corresponding distortion error as in Case~3.

Case~5. $\ell$ intersects the two opposite sides $OA$ and $BC$.

Let $\ell$ intersect the sides $OA$ and $BC$ at the points $D$ and $E$, such that the lengths of $OD$ and $CE$ be $\ga$ and $\gb$, respectively, with their position vectors $\tilde d$ and $\tilde e$ (see Figure~\ref{Fig5}). Then, $\tilde d=\ga \tilde a$, $\tilde e=\beta  \tilde b+(1-\gb)\tilde c$,
\[\tilde p=\frac {\frac{\alpha }{2 \sqrt{2}}\frac{\tilde c+\tilde d}{3} +\frac{\beta }{2 \sqrt{2}}\frac{1}{3} (\tilde c+\tilde d+\tilde e)}{\frac{\alpha }{2 \sqrt{2}}+\frac{\beta }{2 \sqrt{2}}}, \te{ and } \tilde q=\frac{\frac{1-\alpha }{2 \sqrt{2}} \frac{1}{3} (\tilde a+ \tilde b+\tilde d)+\frac{1-\beta }{2 \sqrt{2}} \frac{1}{3} (\tilde b+\tilde d+\tilde e)}{\frac{1-\alpha }{2 \sqrt{2}}+\frac{1-\beta }{2 \sqrt{2}}}.\]
As Case~1, we have
\[\rho(\tilde p, \tilde d)-\rho(\tilde q, \tilde d)=0, \te{ and } \rho(\tilde p, \tilde e)-\rho(\tilde q, \tilde e)=0.\]
Solving the above equations, we obtain two sets of solutions: $\set{\ga=1, \, \gb=0}$, and $\set{\ga=0, \, \gb=1}$. If  $\set{\ga=1, \, \gb=0}$, then the results obtained in this case are same as the results obtained in Case~1. If $\set{\ga=0, \, \gb=1}$, then the results obtained in this case are same as the results obtained in Case~3.

Case~6. $\ell$ intersects the two opposite sides $AB$ and $OC$.

Let $\ell$ intersect the sides $AB$ and $OC$ at the points $D$ and $E$, such that the lengths of $AD$ and $OE$ be $\ga$ and $\gb$, respectively, with their position vectors $\tilde d$ and $\tilde e$. Then, $\tilde d= (1-\alpha )\tilde a+\alpha  \tilde b$, $\tilde e=\beta  \tilde c$,
\[\tilde p=\frac {\frac{\beta }{2 \sqrt{2}} \frac{\tilde a+\tilde e}{3}+ \frac{\alpha }{2 \sqrt{2}} \frac{1}{3} (\tilde a+\tilde d+\tilde e)}{\frac{\alpha }{2 \sqrt{2}}+\frac{\beta }{2 \sqrt{2}}}, \te{ and } \tilde q=\frac{\frac{1-\alpha }{2 \sqrt{2}} \frac{1}{3} (\tilde b+\tilde c+\tilde d)+\frac{1-\beta }{2 \sqrt{2}}\frac{1}{3} (\tilde c+\tilde d+\tilde e)  }{\frac{1-\alpha }{2 \sqrt{2}}+\frac{1-\beta }{2 \sqrt{2}}}.\]
As Case~1, we have
\[\rho(\tilde p, \tilde d)-\rho(\tilde q, \tilde d)=0, \te{ and } \rho(\tilde p, \tilde e)-\rho(\tilde q, \tilde e)=0.\]
Solving the above equations, we obtain two sets of solutions: $\set{\ga=1, \, \gb=0}$, and $\set{\ga=0, \, \gb=1}$. Thus, we see that the results obtained in this case are same as the results obtained in Case~5.

Recall that optimal set of two-means gives the smallest distortion error. Thus, considering all the above possible cases, we see that the set $\set{(\frac{1}{3} (1+\frac{1}{\sqrt{2}}),\frac{1}{3 \sqrt{2}}), \, (\frac{2}{3} (1+\frac{1}{\sqrt{2}}),\frac{\sqrt{2}}{3})} $ forms a unique optimal set of two-means with quantization error $V_2=\frac{2 \sqrt{2}-1}{18 \sqrt{2}}=0.0718274$. Thus, the proof of the proposition is complete.
\end{proof}

\begin{remark} \label{rem1}
From the proof of Proposition~\ref{prop4}, we see that the region bounded by a rhombus has two different centroidal Voronoi tessellations (CVTs) with two generators, and the CVT with the smallest distortion error gives the optimal set of two-means with respect to the uniform distribution.
\end{remark}
In the next section, we describe the optimal quantization for some discrete uniform distributions.

\section{optimal quantization for discrete unform distributions}
Let $S:=\set{(x_i, y_j): 1\leq i,  j\leq n}$ for some positive integer $n$. Then, $S$ is a data set containing $n^2$ observations. Let $\tilde {X}:=(X, Y)$ be a random vector taking values on $S$ with a discrete uniform distribution $P$. $P$ being discrete uniform, the mass function $f(x, y)$ of $P$ is given by
\[f(x, y)=\left\{\begin{array}{ccc}
\frac 1 {n^2} & \te{ for } (x, y) \in S, \\
\ 0  & \te{ otherwise}.
\end{array}\right.
\]
Let $E(\tilde {X})$ be the expected vector of $\tilde {X}$. Then, we have
\begin{align} \label{eq34}
E(\tilde {X})&=\sum_{u, v=1}^n (x_u i+y_v j)f(x_u, y_v)=\Big(\frac 1 n (x_1+x_2+\cdots +x_n), \frac 1 n (y_1+y_2+\cdots +y_n)\Big),
\end{align}
i.e., $E(\tilde {X})=(E(X), E(Y))$, where $E(X)=\frac 1 n (x_1+x_2+\cdots +x_n)$, and $E(Y)= \frac 1 n (y_1+y_2+\cdots +y_n)$, implying the fact that the expected vector of the random vector $\tilde {X}$ with respect to the discrete uniform distribution is the mean of the data set $S$. Proceeding in the similar way we can prove the following proposition:
 \begin{prop} \label{prop34} Let $P$ be a discrete uniform distribution on a data set $S$ containing finite number of observations.  Let $A\sci S$. Then, the conditional expected vector $E(\tilde {X} : \tilde {X} \in A)$ of the random vector $\tilde {X}$ taking values on $A$ with distribution $P$ is the mean of the data points belonging to the subset $A$.
 \end{prop}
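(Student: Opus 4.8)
The plan is to carry out the computation in \eqref{eq34} verbatim, but with the whole data set $S$ replaced by the subset $A$ and the uniform mass $\frac{1}{n^2}$ replaced by the conditional mass induced by the event $\tilde{X}\in A$. The crux is the elementary observation that restricting a discrete uniform law to a subset again produces a uniform law, so that conditional expectation reduces to an unweighted average. First I would recall that for a discrete distribution the conditional expected vector is defined through the conditional mass function, where for any observation $(x,y)\in A$ one has
\[
f\big((x,y)\mid A\big)=\frac{P(\tilde{X}=(x,y))}{P(\tilde{X}\in A)}.
\]

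Next I would use that $P$ is uniform on $S$: every observation carries the same mass $\frac{1}{\text{card}(S)}$, and hence $P(\tilde{X}\in A)=\frac{\text{card}(A)}{\text{card}(S)}$. Substituting both quantities into the display above, the common factor $\frac{1}{\text{card}(S)}$ cancels, leaving
\[
f\big((x,y)\mid A\big)=\frac{1/\text{card}(S)}{\text{card}(A)/\text{card}(S)}=\frac{1}{\text{card}(A)}\qquad\text{for every }(x,y)\in A.
\]
Thus, conditioned on $\tilde{X}\in A$, the random vector $\tilde{X}$ is again uniformly distributed, now on the subset $A$.

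Finally, summing exactly as in \eqref{eq34} but over $A$, and using the identification of a point with its position vector, I would write
\[
E(\tilde{X}:\tilde{X}\in A)=\sum_{(x,y)\in A}(x\,i+y\,j)\,f\big((x,y)\mid A\big)=\frac{1}{\text{card}(A)}\Big(i\sum_{(x,y)\in A}x+j\sum_{(x,y)\in A}y\Big),
\]
whose two coordinates are precisely the arithmetic mean of the first coordinates and the arithmetic mean of the second coordinates of the points of $A$; that is, the conditional expected vector equals the mean of the data points belonging to $A$. I do not expect any genuine obstacle here: the entire content is the cancellation displayed above, which shows that conditioning a uniform distribution on a subset yields uniformity on that subset. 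The only point needing a little care is notational, namely summing over the generic point $(x,y)\in A$ rather than over a double index, so as to avoid collision with the unit vectors $i$ and $j$ and to make the coordinatewise splitting of the sum match the form of \eqref{eq34}.
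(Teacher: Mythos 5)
Your argument is correct and is exactly the route the paper intends: the paper gives no detailed proof, merely asserting that one proceeds as in the computation of $E(\tilde X)$ in \eqref{eq34}, and your write-up supplies precisely those details (the conditional mass function is constant equal to $1/\text{card}(A)$ on $A$, so the conditional expectation is the unweighted average over $A$). The only pedantic addition would be to note $A\neq\es$ so that $P(\tilde X\in A)>0$ and the conditioning is well defined.
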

 Recall that if $\ga$ is an optimal set of $n$-means for a probability distribution $P$, then for any $a\in \ga$, $a$ is the expected value (vector) of its own Voronoi region. Using the above proposition we can determine the optimal sets of $n$-means and the $n$th quantization errors for many finite discrete distributions as illustrated in the following examples. In the following two examples, we give the optimal sets $\ga_n$ of $n$-means, and the $n$th quantization errors $V_n(P)$ for some $n$ for two different discrete distributions. Notice that in the two examples the elements in the two data sets are symmetrically located, and the associated probability distributions are also uniform. It is not difficult to determine the optimal sets of $n$-means for smaller values of $n\leq \te{card(supp(}P))$ for such data sets with uniform distributions. Thus, here we do not show the details of the calculations. Notice that in the figures the optimal quantizers are denoted by `$\times$', and the elements in the corresponding Voronoi regions are denoted by the same color.

\begin{example} \label{example1} Consider the data set $S$ given by
\[S:=\set{(0, 0), (\frac 1 3, 0), (\frac 23, 0), (1, 0), (\frac 1 6, \frac {\sqrt 3}{6}), (\frac 13, \frac {\sqrt 3} 3), (\frac 12, \frac {\sqrt 3}{2}), (\frac 56, \frac{\sqrt 3}{6}), (\frac 23, \frac {\sqrt 3}{3})}\] associated with the probability mass function $f(x, y)$ given by $f(x, y)=\frac 19 $ if $(x, y)\in S$, and zero, otherwise.
Then, by \eqref{eq34}, we have $E(\tilde {X})=(\frac 12, \frac {\sqrt 3}{6})$, i.e., the optimal set of one-mean is the singleton $\set{(\frac 12, \frac {\sqrt 3}{6})}$ and the corresponding quantization error is the variance $V$ given by
\[V=V_1(P)=\sum_{(x, y) \in S}\|(x, y)-(\frac 12, \frac {\sqrt 3}{6})\|^2 f(x, y)=0.185185.\]

Now, due to Proposition~\ref{prop34}, after some calculations, we have

$(i)$ $\ga_2=\set{(0.5, 0.673575), (0.5, 0.096225)}$ with quantization error $0.111111$. Notice that due to rotational symmetry there are three different optimal sets of three-means (see Figure~\ref{Fig22}).
\begin{figure}
\centerline{\includegraphics[width=3.5 in, height=3.5 in, keepaspectratio] {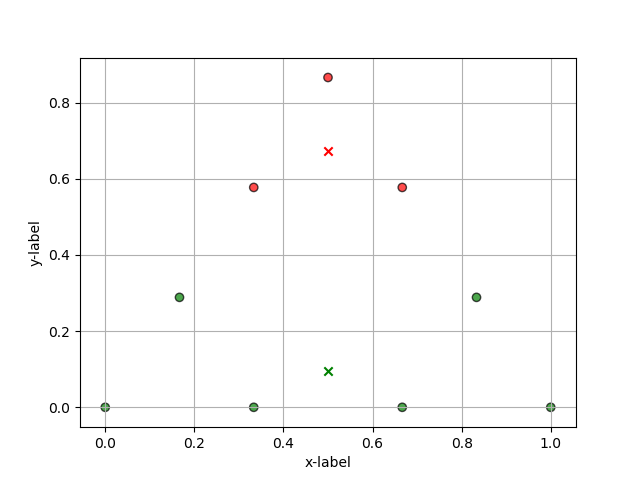}\includegraphics[width=3.5 in, height=3.5 in, keepaspectratio] {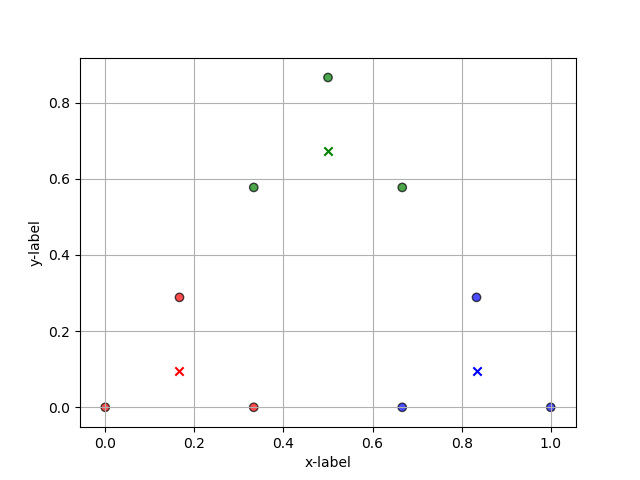}}
\caption{Optimal sets of $n$-means for $n=2$ and $n=3$.} \label{Fig22}
\end{figure}

$(ii)$ $\ga_3=\set{(0.166666,  0.096225), (0.833333,  0.096225),  (0.5,  0.673575)}$ with quantization error $0.037037$. Notice that the optimal set of three-means is unique (see Figure~\ref{Fig22}).

\begin{figure}
\centerline{\includegraphics[width=3.5 in, height=3.5 in, keepaspectratio] {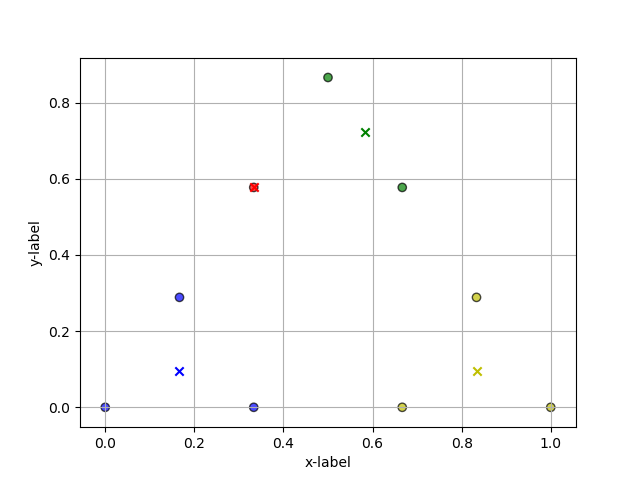}
\includegraphics[width=3.5 in, height=3.5 in, keepaspectratio] {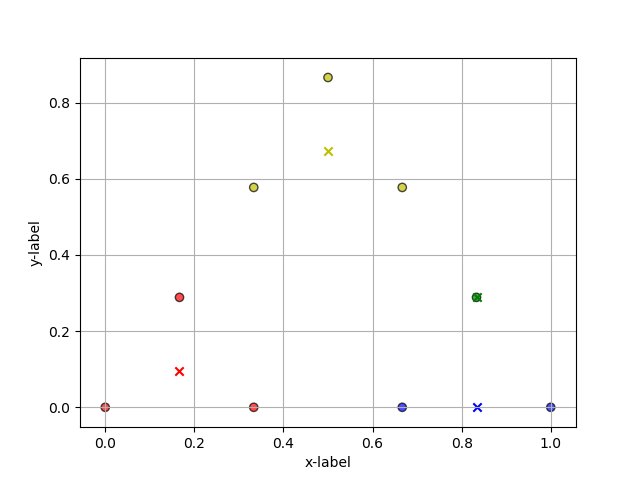}}
\centerline{\includegraphics[width=3.5 in, height=3.5 in, keepaspectratio] {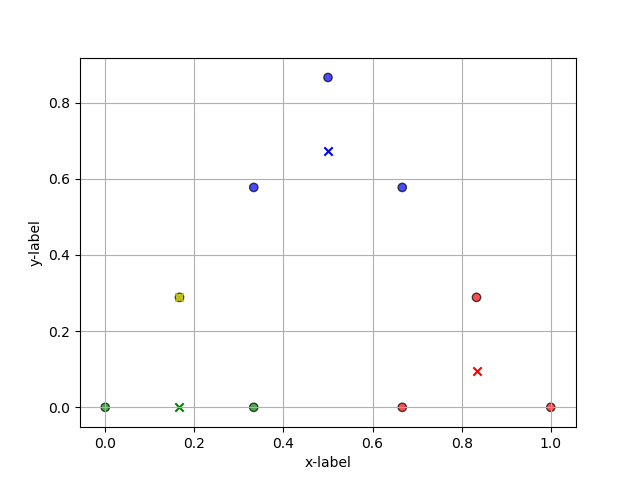}
\includegraphics[width=3.5 in, height=3.5 in, keepaspectratio] {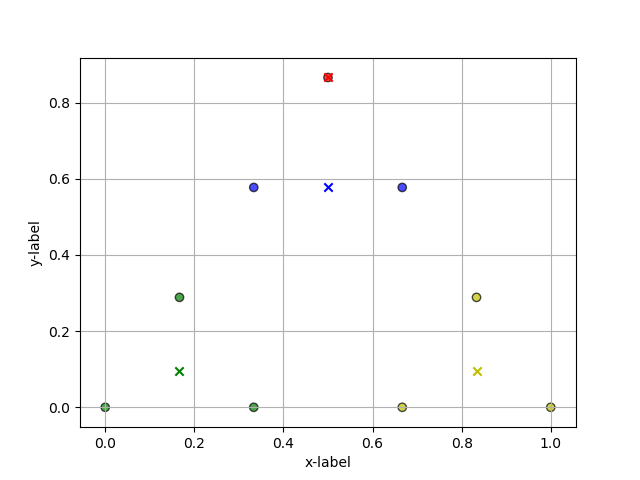}}
\caption{Optimal sets of $n$-means for $n=4$.} \label{Fig24}
\end{figure}

\begin{figure}
\centerline{\includegraphics[width=3.5 in, height=3.5 in, keepaspectratio] {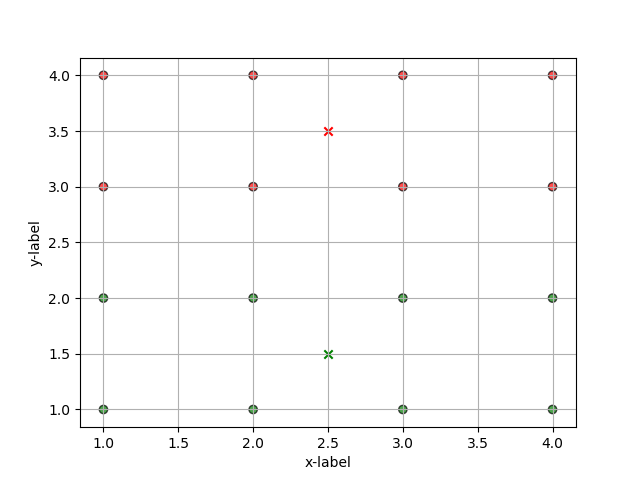} \includegraphics[width=3.5 in, height=3.5 in, keepaspectratio] {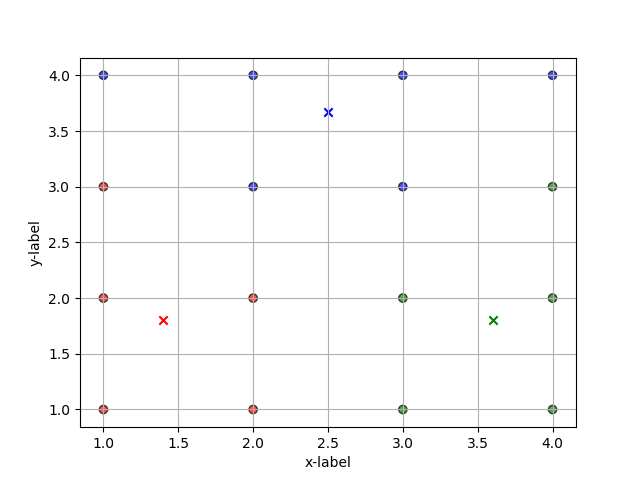}}
\caption{Optimal sets of $n$-means for $n=2$ and $n=3$.} \label{Fig32}
\end{figure}

\begin{figure}
\centerline{\includegraphics[width=3.5 in, height=3.5 in, keepaspectratio] {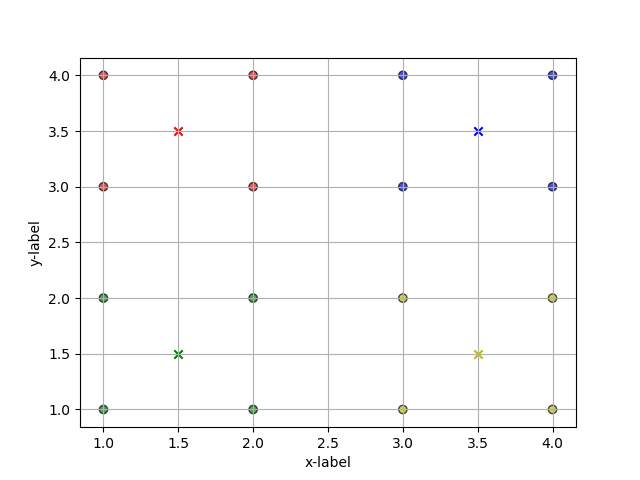}}
\caption{Optimal set of $n$-means for $n=4$.} \label{Fig34}
\end{figure}

$(iii)$ $\ga_4=\set{(0.833333,  0.096225), (0.166666,  0), (0.5, 0.673575), (0.166666,  0.288675)}$ with quantization error  $0.030864$. There are several optimal sets of four-means with the same quantization error $0.030864$ (see Figure~\ref{Fig24}).
\end{example}

\begin{figure}
\centerline{\includegraphics[width=3.5 in, height=3.5 in, keepaspectratio] {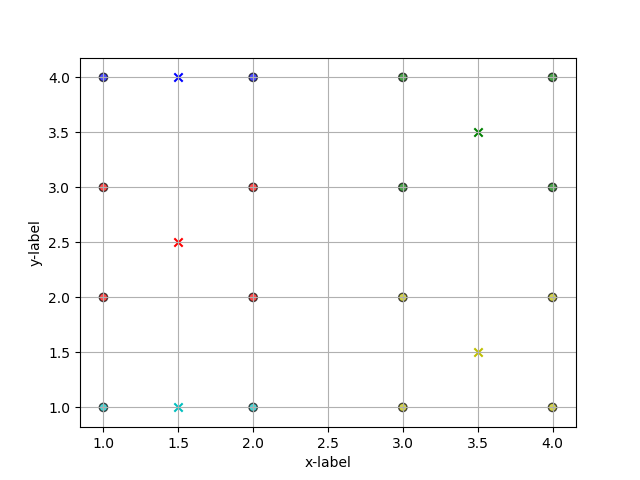} \includegraphics[width=3.5 in, height=3.5 in, keepaspectratio] {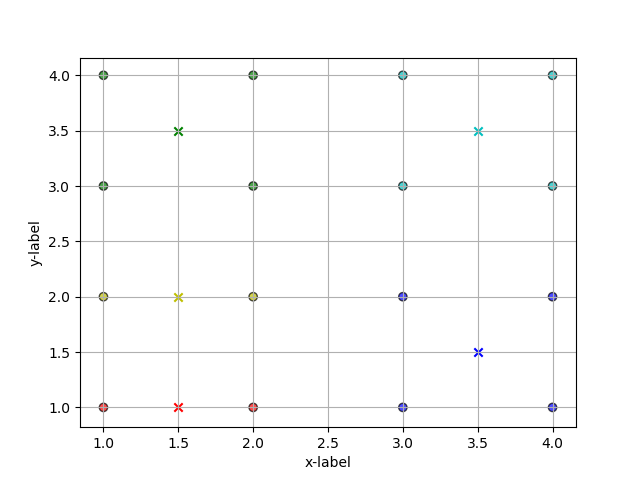}}
\centerline{\includegraphics[width=3.5 in, height=3.5 in, keepaspectratio] {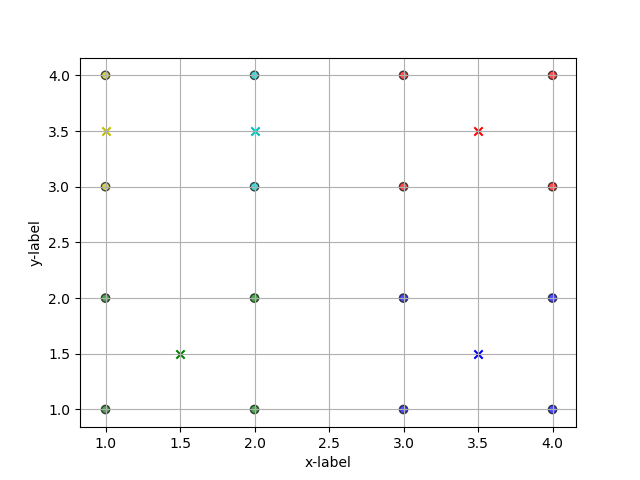} \includegraphics[width=3.5 in, height=3.5 in, keepaspectratio] {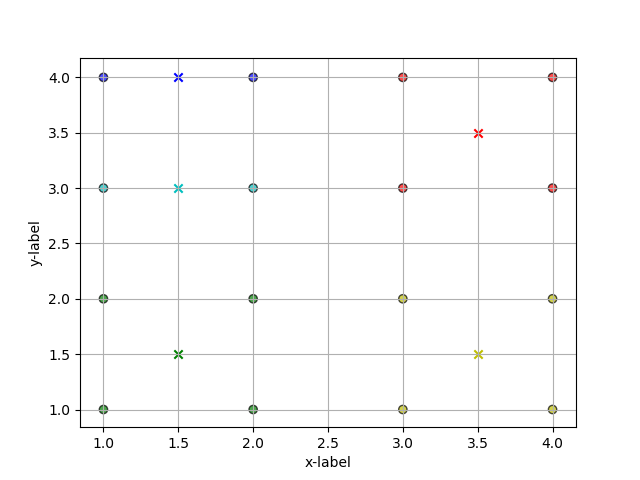}}
\caption{Optimal sets of $n$-means for $n=5$.} \label{Fig35}
\end{figure}

\begin{example} \label{example2} Let $S$ be the data set given by
\[S:=\set{(i, j)\in \D N^2 : 1\leq i, j\leq 4},\]
and the associated probability mass function $f(x_1, x_2)$ is given
\[f(x_1, x_2)=\left\{\begin{array}{cc}
\frac 1{16} & \te{if } (x_1, x_1)\in S,\\
0 & \te{otherwise}.
\end{array}
\right.
\]
Let $X:=(X_1, X_2)$ be the random vector given by the mass function $f(x_1, x_2)$. Then, we have
\[E(X)=\sum_{(x_1,x_2)\in \D R^2}(x_1 i+x_2 j) f(x_1, x_2)=(\frac 5 2, \frac 52),\]
i.e., the optimal set of one-mean is the singleton $\set{(\frac 52, \frac 52)}$ and the corresponding quantization error is the variance $V$ given by
\[V=V_1(P)=\sum_{(x_1, x_2) \in S}\|(x_1, x_2)-(\frac 52, \frac 52)\|^2 f(x_1, x_2)=2.5.\]
Now, due to Proposition~\ref{prop34}, after some calculations, we have

$(i)$ $\ga_2=\set{(2.5, 1.5), (2.5, 3.5)}$ with quantization error $1.5$. Notice that due to rotational symmetry there are two different optimal sets of two-means (see Figure~\ref{Fig32}).

$(ii)$ $\ga_3=\set{(1.4,  1.8), (3.6,  1.8),  (2.5,  3.666667)}$ with quantization error $0.927083$. Notice that due to rotational symmetry there are four different optimal sets of three-means (see Figure~\ref{Fig32}).

$(iii)$ $\ga_4=\set{(1.5,  3.5), (1.5,  1.5),  (3.5,  3.5), (3.5,  1.5)}$ with quantization error $0.5$. Notice that the optimal set of four-means is unique (see Figure~\ref{Fig34}).

$(iv)$ $\ga_5=\set{(3.5,  3.5), (1.5,  1.5),  (1.5,  4), (3.5,  1.5), (1.5,  3)}$ with quantization error $0.4375$. Notice that there are several optimal sets of five-means (see Figure~\ref{Fig35}).
\end{example}
We now conclude the paper with the following remark.

\begin{figure}
\centerline{\includegraphics[width=3.5 in, height=3.5 in, keepaspectratio] {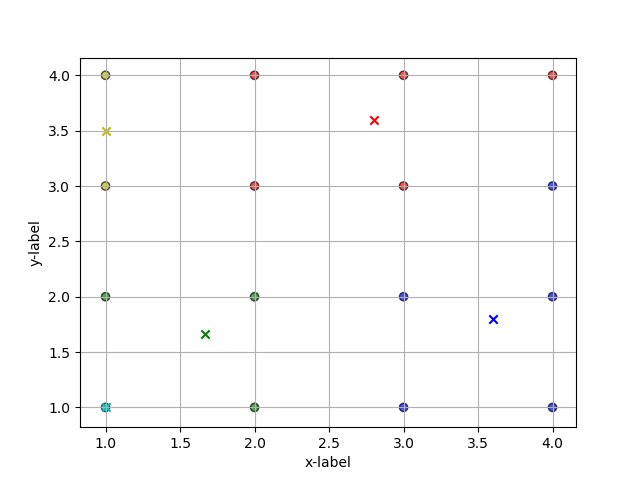}}
\caption{The set of the means does not form an optimal set.} \label{Fig3666}
\end{figure}

\begin{remark}
From Example~\ref{example1} and Example~\ref{example2}, we see that if $\ga$ is an optimal set of $n$-means, then each $a\in \ga$ is the mean of its own Voronoi region. But, the converse is not true. For example, the elements in the set $\gb:=\set{(2.8,  3.6), (1.67,  1.67), (3.6,  1.8), (1,  3.5), (1,1)}$ are the means of their own Voronoi regions (see Figure~\ref{Fig3666}) for the data set associated with the uniform distribution given by Example~\ref{example2}. But, the set $\gb$ is not an optimal set of $n$-means for $n=5$, because the distortion error given by the set $\gb$ is $0.614583$ which is larger than the distortion error given by the set $\ga_5=\set{(3.5,  3.5), (1.5,  1.5),  (1.5,  4), (3.5,  1.5), (1.5,  3)}$ as described in $(iv)$ of Example~\ref{example2}.
\end{remark}


\begin{thebibliography}{9999}


\bibitem[DFG]{DFG} Q. Du, V. Faber and M. Gunzburger, \emph{Centroidal Voronoi Tessellations: Applications and Algorithms}, SIAM Review, Vol. 41, No. 4 (1999), pp. 637-676.

\bibitem[DR]{DR}     C.P. Dettmann and M.K. Roychowdhury, \emph{Quantization for uniform distributions on equilateral triangles}, Real Analysis Exchange, Vol. 42(1), 2017, pp. 149-166.

\bibitem[GG]{GG} A. Gersho and R.M. Gray, \emph{Vector quantization and signal compression}, Kluwer Academy publishers: Boston, 1992.

\bibitem[GKL]{GKL}  R.M. Gray, J.C. Kieffer and Y. Linde, \emph{Locally optimal block quantizer design}, Information and Control, 45 (1980), pp. 178-198.


\bibitem[GL]{GL} A. Gy\"orgy and T. Linder, \emph{On the structure of optimal entropy-constrained scalar quantizers},  IEEE transactions on information theory, vol. 48, no. 2, February 2002.

\bibitem[GL1]{GL1} S. Graf and H. Luschgy, \emph{Foundations of quantization for probability distributions}, Lecture Notes in Mathematics 1730, Springer, Berlin, 2000.


\bibitem[GN]{GN}  R. Gray and D. Neuhoff, \emph{Quantization,} IEEE Trans. Inform. Theory,  44 (1998), pp. 2325-2383.

\bibitem[MKT]{MKT} S. Matsuura, H. Kurata and T. Tarpey, \emph{Optimal estimators of principal points for minimizing expected mean squared distance}, Journal of Statistical Planning and Inference, 167 (2015), 102-122.

\bibitem[R]{R} M.K. Roychowdhury, \emph{Optimal quantizers for some absolutely continuous probability measures}, Real Analysis Exchange, Vol. 43(1), 2017, pp. 105-136.

\bibitem[RR]{RR} J. Rosenblatt and M.K. Roychowdhury, \emph{Optimal quantization for piecewise uniform distributions}, Uniform DistributionTheory 13 (2018), no. 2, 23-55.


\bibitem[Z]{Z} R. Zam, \emph{Lattice Coding for Signals and Networks: A Structured Coding Approach to Quantization, Modulation, and Multiuser Information Theory}, Cambridge University Press, 2014.














\end{thebibliography}
\end{document}